\begin{document}

\allowdisplaybreaks

\renewcommand{\thefootnote}{$\star$}

\newcommand{\arXivNumber}{1510.08599}

\renewcommand{\PaperNumber}{042}

\FirstPageHeading

\ShortArticleName{Zeros of Quasi-Orthogonal Jacobi Polynomials}

\ArticleName{Zeros of Quasi-Orthogonal Jacobi Polynomials\footnote{This paper is a~contribution to the Special Issue
on Orthogonal Polynomials, Special Functions and Applications.
The full collection is available at \href{http://www.emis.de/journals/SIGMA/OPSFA2015.html}{http://www.emis.de/journals/SIGMA/OPSFA2015.html}}}

\Author{Kathy DRIVER and Kerstin JORDAAN}
\AuthorNameForHeading{K.~Driver and K.~Jordaan}
\Address{Department of Mathematics and Applied Mathematics, University of Pretoria,\\ Pretoria, 0002, South Africa}
\Email{\href{mailto:kathy.driver@uct.ac.za}{kathy.driver@uct.ac.za}, \href{mailto:kerstin.jordaan@up.ac.za}{kerstin.jordaan@up.ac.za}}

\ArticleDates{Received October 30, 2015, in f\/inal form April 20, 2016; Published online April 27, 2016}

\Abstract{We consider interlacing properties satisf\/ied by the zeros of Jacobi polynomials in quasi-orthogonal sequences characterised by $\alpha>-1$, $-2<\beta<-1$. We give necessary and suf\/f\/icient conditions under which a conjecture by Askey, that the zeros of Jacobi polyno\-mials~$P_n^{(\alpha, \beta)}$ and~$P_{n}^{(\alpha,\beta+2)}$ are interlacing, holds when the parameters~$\alpha$ and~$\beta$ are in the range $\alpha>-1$ and $-2<\beta<-1$. We prove that the zeros of $P_n^{(\alpha, \beta)}$ and $P_{n+1}^{(\alpha,\beta)}$ do not interlace for any $n\in\mathbb{N}$, $n\geq2$ and any f\/ixed~$\alpha$,~$\beta$ with $\alpha>-1$, $-2<\beta<-1$. The interlacing of zeros of~$P_n^{(\alpha,\beta)}$ and $P_m^{(\alpha,\beta+t)}$ for $m,n\in\mathbb{N}$ is discussed for~$\alpha$ and~$\beta$ in this range, $t\geq 1$, and new upper and lower bounds are derived for the zero of $P_n^{(\alpha,\beta)}$ that is less than~$-1$.}

\Keywords{interlacing of zeros; quasi-orthogonal Jacobi polynomials}

\Classification{33C50; 42C05}

\renewcommand{\thefootnote}{\arabic{footnote}}
\setcounter{footnote}{0}

\section{Introduction}
Let $\{p_n\}_{n=0}^\infty$, $\deg(p_n)=n$, $n\in\mathbb{N}$, be a sequence of orthogonal polynomials with respect to a~positive Borel measure $\mu$ supported on an interval~$(a,b)$. It is well known
(see~\cite{Sze}) that the zeros of $p_n$ are real and simple and lie in $(a,b)$ while, if we denote the zeros of $p_n$, in increasing
order by $x_{1,n}<x_{2,n}<\dots<x_{n,n}$, then
\begin{gather*}
x_{1,n}<x_{1,n-1}<x_{2,n}<x_{2,n-1}<\dots<x_{n-1,n-1}<x_{n,n},
\end{gather*}
a property called the interlacing of zeros.

Since our discussion will include interlacing of zeros of polynomials of non-consecutive degree, we recall the following def\/initions:
\begin{definition}\label{SI}
Let $n \in {\mathbb N}$. If $x_{1,n}<x_{2,n}<\dots<x_{n,n}$ are the zeros of $p_n$ and
$y_{1,n}<y_{2,n}<\dots<y_{n,n}$ are the zeros of $q_{n}$, then the zeros of $p_n$ and $q_{n}$ are interlacing if
\begin{gather*}x_{1,n}<y_{1,n}<x_{2,n}<y_{2,n}<\dots<x_{n,n}<y_{n,n}
\end{gather*}
or if
\begin{gather*}y_{1,n}<x_{1,n}<y_{2,n}<x_{2,n}<\dots<y_{n,n}<x_{n,n},
\label{1.5}\end{gather*}
\end{definition}

The def\/inition of interlacing of zeros of two polynomials whose degrees dif\/fer by more than one was introduced by Stieltjes~\cite{Sze}.

\begin{definition}
 Let $m,n \in {\mathbb N}$, $m \leq n-2$. The zeros of the polynomials $p_n$ and~$q_m$ are interlacing if there exist~$m$ open intervals, with endpoints at successive zeros of~$p_n$, each of which contains exactly one zero of~$q_m$.
 \end{definition}

{\bf Askey conjecture.} In \cite{Ask}, Richard Askey
conjectured that the zeros of the Jacobi polyno\-mials~$P_{n}^{(\alpha,\beta)}$ and $P_n^{(\alpha,\beta+2)}$ are interlacing for each $n \in {\mathbb N}$, $\alpha,\beta > -1$. A more
general version of the Askey conjecture was proved in~\cite{DrJoMb}, namely that
the zeros of $P_{n}^{(\alpha,\beta)}$ and the zeros of $P_n^{(\alpha-k,\beta+t)}$ are interlacing
for each $n\in\mathbb{N}$, $\alpha,\beta>-1$ and any real numbers $t$ and~$k$ with $0\leq t,k\leq 2$.

Here, we investigate Askey's conjecture,
and several extensions thereof, in the context of sequences of Jacobi polynomials that are
quasi-orthogonal of order~$1$.

The concept of quasi-orthogonality of order $1$ was introduced by
Riesz in~\cite{Rie} in his seminal work on the moment problem. Fej\'{e}r~\cite{Fej} considered
quasi-orthogonality of order $2$ while the general case was f\/irst studied by Shohat~\cite{Sho}. Chihara~\cite{Chi} discussed quasi-orthogonality of order $r$ in the context of three-term recurrence relations and Dickinson~\cite{Dickinson} improved Chihara's result by deriving a system of recurrence relations that provides both necessary and suf\/f\/icient conditions for quasi-orthogonality. Algebraic properties of the linear functional associated with quasi-orthogonality are investigated in \cite{Draux, maroni1, maroni2, maroni}. Quasi-orthogonal polynomials have also been studied in the context of connection coef\/f\/icients, see for example \cite{area, askey65,Dimitrov, Ism, sz,tr73,tr75,wilson} as well as Geronimus canonical spectral transformations of the measure (cf.~\cite{zh}). Properties of orthogonal polynomials associated with such Geronimus perturbations, including properties satisf\/ied by the zeros, have been analysed in~\cite{Branq}.

The def\/inition of quasi-orthogonality of a sequence of polynomials is the following:

\begin{definition}
 Let $\{q_n\}_{n=0}^\infty$ be a sequence of polynomials with degree $q_n = n$ for each $n \in
 {\mathbb N}$. For a positive integer $r < n$, the sequence $\{q_n\}_{n=0}^\infty$ is
 quasi-orthogonal of order $r$ with respect to a positive Borel measure $\mu$ if
\begin{gather}
\int x^k q_n(x) d{\mu} (x) =0 \qquad \mbox{for} \quad k=0,\ldots,n-1-r.
 \label{2}
 \end{gather}
\end{definition}

If (\ref{2}) holds for $r=0$, the sequence $\{q_n\}_{n=0}^\infty$ is orthogonal with respect to
 the measure~$\mu$. A~cha\-racterisation of a polynomial $q_n$ that is quasi-orthogonal of order~$r$ with respect to a~positive measure~$\mu$, as a linear combination of $p_n,p_{n-1},\dots,p_{n-r}$ where $\{p_n\}_{n=0}^\infty$ is orthogonal with respect to~$\mu$, was f\/irst investigated by Shohat (cf.~\cite{Sho}). A full statement and proof of this result can be found in \cite[Theorem~1]{BDR}.

Quasi-orthogonal polynomials arise in a natural way in the context of
 classical orthogonal polynomials that depend on one or more parameters. The sequence of Jacobi polyno\-mials~$\big\{P_{n}^{(\alpha, \beta)}\big\} _{n=0}^\infty$ is orthogonal on $(-1,1)$ with respect to the weight function $(1-x)^{\alpha}(1+x)^{\beta}$ when $\alpha > -1$, $\beta > -1$. The three-term recurrence relation \cite[(4.5.1)]{Sze} satisf\/ied by the sequence is
 \begin{gather}
 c_n P_n^{(\alpha, \beta)} (x) = (x - d_n) P_{n-1}^{(\alpha, \beta)} (x) - e_n P_{n-2}^{(\alpha, \beta)} (x), \qquad n = 2,3,\dots, \label{1}
 \end{gather}
where
\begin{gather}c_{n} = 2n(n +\alpha + \beta)/ (2n +\alpha + \beta-1)(2n +\alpha + \beta),\nonumber\\
 d_{n} = \big({\beta}^2 - {\alpha}^2\big)/(2n +\alpha + \beta -2) (2n +\alpha + \beta), \label{d}\\
 e_{n} = 2(n +\alpha -1) (n + \beta -1)/(2n +\alpha + \beta-2)(2n +\alpha + \beta -1),\nonumber
 \end{gather}
and $P_{0}^{(\alpha,\beta)} (x) \equiv 1$, $P_1^{(\alpha,\beta)} (x) = \frac12 (\alpha
+\beta +2) x +\frac12(\alpha - \beta)$. For values of $\alpha$ and $\beta$ outside the range
$\alpha ,\beta > -1$, the Jacobi sequence $\big\{P_{n}^{(\alpha, \beta)}\big\} _{n=0}^\infty$ can be
def\/ined by the three term recurrence relation~\eqref{1}. The quasi-orthogonal Jacobi sequences of order $1$ and $2$ are of particular interest since, apart from the orthogonal Jacobi sequences, these are the only sequences of Jacobi polynomials for $\alpha,\beta\in \mathbb{R}$ where all~$n$ zeros of $P_{n}^{(\alpha,\beta)}$ are real and distinct.

In \cite[Theorem~7]{BDR}, it is proved that if $-1< \alpha,\beta <0$, and $k,l \in {\mathbb N}$ with $k+l<n$, the
 Jacobi polynomials $\big\{P_{n}^{(\alpha-k, \beta-l)}\big\} _{n=0}^\infty$ are quasi-orthogonal of order~$k+l$ with respect to the weight function $(1-x)^{\alpha}(1+x)^{\beta}$ on the interval $[-1,1]$.

Interlacing properties of zeros of quasi-orthogonal and orthogonal Jacobi polynomials of the same or consecutive degree were discussed and the following result proved in \cite[Corollary~4]{BDR}.

\begin{lemma}\label{bdr}
Fix $\alpha$ and $\beta$, $\alpha > -1$ and $-2 < \beta< -1$ and denote the sequence of Jacobi
polynomials by $\big\{P_{n}^{(\alpha, \beta)}\big\}_{n=0}^\infty$. For each $n \in{\mathbb N}$, $n \geq
1$, let $x_{1,n}<x_{2,n}<\dots<x_{n,n}$ denote the zeros of the $($quasi-orthogonal$)$ polynomial $P_{n}^{(\alpha,\beta)}$ and
$y_{1,n}<y_{2,n}<\dots<y_{n,n}$ denote the zeros of the $($orthogonal$)$ polynomial $P_{n}^{(\alpha,\beta +1)}$. Then
\begin{gather} x_{1,n}< -1 < y_{1,n} < x_{2,n}< y_{2,n} <\dots< x_{n,n}< y_{n,n}<1
\label{2.1}
\end{gather}
and
\begin{gather}x_{1,n+1}< -1 < y_{1,n} < x_{2,n+1}< y_{2,n} <\dots< x_{n,n+1}< y_{n,n}<
x_{n+1,n+1}<1, \label{2.2}
\end{gather}
\end{lemma}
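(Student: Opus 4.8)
The plan is to base the whole argument on one contiguous relation that writes the quasi-orthogonal polynomial as a combination of two consecutive \emph{orthogonal} Jacobi polynomials, and then to read off the zero locations from a sign-and-count analysis. Since $-2<\beta<-1$ gives $\beta+1\in(-1,0)$ and $\alpha>-1$, the sequence $\big\{P_m^{(\alpha,\beta+1)}\big\}$ is orthogonal on $(-1,1)$ with respect to $(1-x)^\alpha(1+x)^{\beta+1}$; in particular its members have simple zeros in $(-1,1)$ and consecutive members interlace. By the characterisation of order-$1$ quasi-orthogonality as a linear combination of $P_n$ and $P_{n-1}$ (\cite[Theorem~1]{BDR}) I would write
\[
P_n^{(\alpha,\beta)}(x)=a_n\,P_n^{(\alpha,\beta+1)}(x)+b_n\,P_{n-1}^{(\alpha,\beta+1)}(x),
\]
and then pin down the coefficients explicitly, for instance by comparing both sides at $x=1$ and $x=-1$ or directly from \eqref{1}, obtaining $a_n=(n+\alpha+\beta+1)/(2n+\alpha+\beta+1)$ and $b_n=(n+\alpha)/(2n+\alpha+\beta+1)$. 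The key structural fact is that \emph{both} coefficients are strictly positive for $n\ge 2$ in the given parameter range (the degenerate case $n=1$ is checked directly from the linear forms of $P_1^{(\alpha,\beta)}$ and $P_1^{(\alpha,\beta+1)}$); this relation also re-establishes order-$1$ quasi-orthogonality for the full range $\alpha>-1$.

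Next I would establish the interlacing inside $(-1,1)$. Evaluating the relation at a zero $y_{j,n}$ of $P_n^{(\alpha,\beta+1)}$ kills the first term and leaves $P_n^{(\alpha,\beta)}(y_{j,n})=b_n\,P_{n-1}^{(\alpha,\beta+1)}(y_{j,n})$. Because $P_{n-1}^{(\alpha,\beta+1)}$ and $P_n^{(\alpha,\beta+1)}$ are consecutive orthogonal polynomials, $P_{n-1}^{(\alpha,\beta+1)}(y_{j,n})$ has sign $(-1)^{n-j}$, and since $b_n>0$ the values $P_n^{(\alpha,\beta)}(y_{j,n})$ alternate in sign. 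This forces a zero of $P_n^{(\alpha,\beta)}$ in each interval $(y_{j,n},y_{j+1,n})$, $j=1,\dots,n-1$, supplying the $n-1$ interlacing zeros $x_{2,n},\dots,x_{n,n}$ in the middle of \eqref{2.1}.

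The remaining zero is located from the boundary behaviour, and I expect this localisation to be the main obstacle. Using $P_n^{(\alpha,\beta)}(-1)=(-1)^n\binom{n+\beta}{n}$ together with the positive leading coefficient, the sign of $P_n^{(\alpha,\beta)}$ is $(-1)^n$ as $x\to-\infty$ but $(-1)^{n+1}$ at $x=-1$; the sign flip occurs precisely because $\binom{n+\beta}{n}<0$ when $-2<\beta<-1$ (the factor $\beta+1$ being negative), which is exactly where the hypothesis on $\beta$ enters. Hence $P_n^{(\alpha,\beta)}$ has a zero in $(-\infty,-1)$. Since order-$1$ quasi-orthogonality guarantees that $P_n^{(\alpha,\beta)}$ has exactly $n$ simple real zeros, the $n-1$ interlacing zeros together with this one exhaust them, so $x_{1,n}<-1$ and no zero lies in $[-1,y_{1,n}]$ or $[y_{n,n},1)$; this yields \eqref{2.1}.

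Finally, \eqref{2.2} follows by the same scheme with $n$ replaced by $n+1$. Evaluating at the zeros $y_{j,n}$ of $P_n^{(\alpha,\beta+1)}$ now leaves $P_{n+1}^{(\alpha,\beta)}(y_{j,n})=a_{n+1}P_{n+1}^{(\alpha,\beta+1)}(y_{j,n})$, and interlacing of $P_{n+1}^{(\alpha,\beta+1)}$ with $P_n^{(\alpha,\beta+1)}$ again makes these values alternate in sign (now with $a_{n+1}>0$), giving $n-1$ zeros of $P_{n+1}^{(\alpha,\beta)}$ strictly between $y_{1,n}$ and $y_{n,n}$. The endpoint computation at $-1$ again produces one zero below $-1$, while comparing the sign of $P_{n+1}^{(\alpha,\beta)}$ at $y_{n,n}$ (negative) with $P_{n+1}^{(\alpha,\beta)}(1)=\binom{n+1+\alpha}{n+1}>0$ forces the top zero $x_{n+1,n+1}$ into $(y_{n,n},1)$. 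Counting $1+(n-1)+1=n+1$ simple real zeros then closes \eqref{2.2}.
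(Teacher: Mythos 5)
Your argument is essentially correct for $n\ge 2$, and it is genuinely different from the paper's treatment, because the paper gives no proof of this lemma at all: it simply defers to \cite[Corollary~4(ii)(a)]{BDR} with $\beta$ replaced by $\beta+1$. In effect you have reconstructed the missing argument. Your connection formula is the standard contiguous relation
\begin{gather*}
(2n+\alpha+\beta+1)P_n^{(\alpha,\beta)}(x)=(n+\alpha+\beta+1)P_n^{(\alpha,\beta+1)}(x)+(n+\alpha)P_{n-1}^{(\alpha,\beta+1)}(x),
\end{gather*}
your coefficients $a_n$, $b_n$ are the right ones, and the positivity claims ($b_n>0$ for all $n\ge1$; $a_n>0$ for $n\ge2$) hold in the stated range. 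The logic---the second term vanishing at the $y_{j,n}$, sign alternation inherited from interlacing within the orthogonal family $\big\{P_m^{(\alpha,\beta+1)}\big\}$, one exterior zero detected by comparing $P_n^{(\alpha,\beta)}(-1)=(-1)^n\binom{n+\beta}{n}$ (of sign $(-1)^{n+1}$ since $\beta+1<0$) with the sign at $-\infty$, then a count of $n$ (resp.\ $n+1$) pairwise disjoint locations against the degree---is exactly the mechanism this paper itself uses in Section~6 (compare the proofs of Theorems~\ref{Th:2.1}, \ref{Th:Askey1} and~\ref{Th:la}, which evaluate mixed three-term relations at successive zeros and count sign changes). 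What your version buys is self-containedness and explicit constants; what the citation buys the authors is avoiding precisely the boundary-case bookkeeping discussed next. Note also that the final counting step makes your appeal to the general quasi-orthogonality zero theory redundant: the $n-1$ sign-change intervals plus the zero in $(-\infty,-1)$ already account for all $n$ zeros.

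There is one concrete soft spot: the claim that the sign of $P_n^{(\alpha,\beta)}$ near $-\infty$ is $(-1)^n$ requires the leading coefficient $2^{-n}\binom{2n+\alpha+\beta}{n}$ to be positive. For $n\ge2$ this is fine, since every factor $n+\alpha+\beta+j$ with $1\le j\le n$ satisfies $n+\alpha+\beta+j>n-2\ge0$; but for $n=1$ the leading coefficient is $\frac12(\alpha+\beta+2)$, which is negative when $\alpha+\beta+2<0$. In that regime the ``direct check'' you defer for $n=1$ does not go through---and cannot, because \eqref{2.1} is then false as stated: for $\alpha=-0.9$, $\beta=-1.5$ one has $P_1^{(\alpha,\beta)}(x)=-0.2x+0.3$, whose only zero is $x=1.5>1$, not less than $-1$. (Your proof of \eqref{2.2} is unaffected: it only needs $a_{n+1}>0$ and the positivity of the degree-$(n+1)$ leading coefficient, both of which hold since $n+1\ge2$.) So you should either state \eqref{2.1} for $n\ge2$, or add the hypothesis $\alpha+\beta+2>0$ when $n=1$; this is a defect of the lemma as quoted (for all $n\ge1$) inherited from the citation, not of your method for $n\ge2$, which is where all of the paper's subsequent applications live.
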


For proof see \cite[Corollary~4(ii)(a)]{BDR} with $\beta$ replaced by $\beta +1$.

In \cite{bu}, Bustamante, Mart\'{i}nez-Cruz and Quesada apply the interlacing properties of zeros of quasi-orthogonal and orthogonal Jacobi polynomials given in~\cite{DrJoMb} and in Lemma~\ref{bdr} to show that best possible one-sided polynomial approximants to a unit step function on the interval $[-1,1]$, which are in some cases unique, can be obtained using Hermite interpolation at interlaced zeros of quasi-orthogonal and orthogonal Jacobi polynomials.

We assume throughout this paper that~$\alpha$ and $\beta$ are f\/ixed numbers lying in the range $\alpha>-1$, $-2<\beta<-1$.

In Section~\ref{sectionintqo}, we analyse the interlacing properties of zeros of polynomials of consecutive, and non-consecutive, degree within a sequence of quasi-orthogonal Jacobi polynomials of order~$1$. In Section~\ref{sectionAskey} we prove a necessary and suf\/f\/icient condition for the Askey conjecture to hold between the zeros of an orthogonal and a quasi-orthogonal (order~1) sequence of Jacobi polynomials of the same degree and then extend this to the case where the polynomials are of consecutive degree. In Section~\ref{sectionst} we discuss interlacing properties and inequalities satisf\/ied by the zeros of orthogonal and quasi-orthogonal (order~$1$) Jacobi polynomials whose degrees dif\/fer by more than unity. In Section~\ref{bounds} we derive upper and lower bounds for the zero of $P_n^{(\alpha,\beta)}$ that is $<-1$.

Note that, since Jacobi polynomials satisfy the symmetry property \cite[equation~(4.1.1)]{Ism}
\begin{gather}
P_n^{(\alpha, \beta)} (x) = {(-1)}^n P_n^{(\beta, \alpha)} (-x), \label{2.6}
\end{gather}
each result proved for quasi-orthogonal Jacobi polynomials $P_n^{(\alpha,\beta)}$ with $\alpha > -1$, $-2 < \beta< -1$ has an analogue for the corresponding quasi-orthogonal polynomial with $\beta>-1$, $-2<\alpha<-1$.

\section{Quasi-orthogonal Jacobi polynomials of order 1}\label{sectionintqo}

\subsection[Zeros of $P_{n}^{(\alpha,\beta)}$ and $P_{n-k}^{(\alpha,\beta)}$, $k,n\in\mathbb{N}$, $1\leq k<n$]{Zeros of $\boldsymbol{P_{n}^{(\alpha,\beta)}}$ and $\boldsymbol{P_{n-k}^{(\alpha,\beta)}}$, $\boldsymbol{k,n\in\mathbb{N}}$, $\boldsymbol{1\leq k<n}$}

Our f\/irst result proves that for any $n \in {\mathbb N}$, $n \geq 2$, interlacing does not hold between the
$n$ real zeros of $P_{n}^{(\alpha,\beta)}$ and the $n+1$ real zeros of $P_{n+1}^{(\alpha,\beta)}$.
However, the $n-1$ zeros of $P_{n}^{(\alpha,\beta)}$ in $(-1,1)$ interlace with the $n$ zeros of $P_{n+1}^{(\alpha,\beta)}$ in $(-1,1)$.
Moreover, the $n+1$ zeros of $(1+x) P_{n}^{(\alpha,\beta)}(x)$ interlace with the $n+1$ zeros of
$P_{n+1}^{(\alpha,\beta)}(x)$ for each $n\in\mathbb{N}$.

\begin{theorem} \label{Th:2.1}
Fix $\alpha$ and $\beta$, $\alpha > -1$ and $-2 < \beta< -1$ and denote the sequence of Jacobi
polynomials by $\big\{P_{n}^{(\alpha, \beta)}\big\}_{n=0}^\infty$. For each $n \in{\mathbb N}$, $n \geq
1$, let $x_{1,n}<x_{2,n}<\dots<x_{n,n}$ denote the zeros of $P_{n}^{(\alpha,\beta)}$. Then
\begin{gather}x_{1,n} < x_{1,n+1}< -1 < x_{2,n+1}< x_{2,n} <\dots < x_{n,n+1}< x_{n,n}< x_{n+1,n+1}<1. \label{2.3}
\end{gather}
\end{theorem}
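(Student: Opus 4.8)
The plan is to use the \emph{orthogonal} Jacobi polynomial $P_n^{(\alpha,\beta+1)}$, whose zeros $y_{1,n}<\dots<y_{n,n}$ appear in Lemma~\ref{bdr}, as a bridge. The difficulty is that \eqref{2.1} and \eqref{2.2} place both $x_{i,n}$ and $x_{i,n+1}$ in the \emph{same} interval $(y_{i-1,n},y_{i,n})$, so the interlacing data alone cannot order them. To break the tie I would evaluate $P_{n+1}^{(\alpha,\beta)}$ directly at the zeros of $P_n^{(\alpha,\beta)}$ through the contiguous relation
\begin{gather*}
(2n+\alpha+\beta+2)(1+x)P_n^{(\alpha,\beta+1)}(x)=2(n+1)P_{n+1}^{(\alpha,\beta)}(x)+2(n+\beta+1)P_n^{(\alpha,\beta)}(x),
\end{gather*}
which holds for all real $\alpha,\beta$ by analytic continuation (the case $n=0$ is immediate from $P_1^{(\alpha,\beta)}$, and it propagates through \eqref{1}). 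Evaluating at a zero $x_{i,n}$ of $P_n^{(\alpha,\beta)}$ annihilates the last term and gives
\begin{gather*}
P_{n+1}^{(\alpha,\beta)}(x_{i,n})=\frac{2n+\alpha+\beta+2}{2(n+1)}\,(1+x_{i,n})\,P_n^{(\alpha,\beta+1)}(x_{i,n}),
\end{gather*}
where the scalar prefactor is positive throughout $\alpha>-1$, $-2<\beta<-1$.

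Next I would read off the sign of the right-hand side. By \eqref{2.1}, exactly $i-1$ zeros of $P_n^{(\alpha,\beta+1)}$ lie below $x_{i,n}$, so, since $P_n^{(\alpha,\beta+1)}$ has positive leading coefficient, $\operatorname{sign}P_n^{(\alpha,\beta+1)}(x_{i,n})=(-1)^{n-i+1}$; meanwhile $(1+x_{i,n})<0$ precisely when $i=1$, because $x_{1,n}<-1<x_{2,n}$. Combining these yields $\operatorname{sign}P_{n+1}^{(\alpha,\beta)}(x_{i,n})=(-1)^{n-i+1}$ for $2\le i\le n$ and $\operatorname{sign}P_{n+1}^{(\alpha,\beta)}(x_{1,n})=(-1)^{n+1}$, so the sign of $P_{n+1}^{(\alpha,\beta)}$ is known at every zero of $P_n^{(\alpha,\beta)}$.

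Finally I would convert these signs into positions using \eqref{2.2}, which locates the zeros of $P_{n+1}^{(\alpha,\beta)}$ as $x_{1,n+1}<-1<y_{1,n}<x_{2,n+1}<y_{2,n}<\dots<y_{n,n}<x_{n+1,n+1}<1$ and hence determines the sign of $P_{n+1}^{(\alpha,\beta)}$ on each subinterval cut out by its own zeros. For $2\le i\le n$, the value $(-1)^{n-i+1}$ coincides with the sign of $P_{n+1}^{(\alpha,\beta)}$ immediately to the \emph{right} of $x_{i,n+1}$, which forces $x_{i,n+1}<x_{i,n}$; at $i=1$ the value $(-1)^{n+1}$ matches the sign to the \emph{left} of $x_{1,n+1}$, forcing $x_{1,n}<x_{1,n+1}$, while $x_{n,n}<y_{n,n}<x_{n+1,n+1}$ is immediate. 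Threading these relations together with $x_{1,n+1}<-1<x_{2,n+1}$ from \eqref{2.2} produces exactly the chain \eqref{2.3}.

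I expect the main obstacle to be the sign bookkeeping in the range $-2<\beta<-1$: one must check that the leading coefficient of $P_{n+1}^{(\alpha,\beta)}$ and the prefactor above stay positive for every $n\ge1$, even though factors such as $n+\beta+1$ can be negative for small $n$, and one must correctly attribute each computed sign to the left- or right-hand side of the neighbouring zero of $P_{n+1}^{(\alpha,\beta)}$. The sign change of $(1+x)$ at $x=-1$ is the feature that separates the single sub-$(-1)$ zero from the interior ones, so the argument must be handled most carefully precisely there.
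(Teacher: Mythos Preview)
Your proposal is correct and uses exactly the same contiguous relation and the same input from Lemma~\ref{bdr} as the paper. The only difference is bookkeeping: the paper multiplies the values of $P_{n+1}^{(\alpha,\beta)}$ at \emph{consecutive} zeros $x_{i,n},x_{i+1,n}$, so only sign \emph{changes} are tracked and no knowledge of absolute signs (or of the sign of the leading coefficient of $P_{n+1}^{(\alpha,\beta)}$) is needed; you instead compute the sign of $P_{n+1}^{(\alpha,\beta)}(x_{i,n})$ outright and match it against the sign pattern determined by \eqref{2.2}. Your route is slightly more direct (it bypasses the paper's counting argument about ``$n$ zeros accounted for, hence $0$ or $2$ left in $(x_{1,n},x_{2,n})$''), at the cost of the extra verification that the leading coefficient of $P_{n+1}^{(\alpha,\beta)}$ is positive for $\alpha>-1$, $-2<\beta<-1$, $n\ge 1$---which it is, since every factor $n+\alpha+\beta+2,\dots,2n+\alpha+\beta+2$ exceeds $n-1\ge 0$.
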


\begin{corollary} Let $\big\{P_{n}^{(\alpha, \beta)}\big\}_{n=0}^\infty$ denote the sequence of Jacobi polynomials and fix~$\alpha$ and~$\beta$ with $\alpha > -1$ and $-2 < \beta< -1$.
The zeros of $P_{n-k}^{(\alpha,\beta)}$ and the zeros of $P_{n}^{(\alpha,\beta)}$ do not interlace for any $k,n \in{\mathbb N}$, $n \geq 3$, $k\in\{1,\dots,n-1\}$.
\end{corollary}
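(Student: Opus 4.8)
The plan is to prove Theorem~\ref{Th:2.1} first, since the Corollary follows from it by a straightforward argument. For the theorem, the key structural fact, supplied by Lemma~\ref{bdr}, is that the polynomial $P_n^{(\alpha,\beta)}$ has exactly one zero $x_{1,n}<-1$, with its remaining $n-1$ zeros lying in $(-1,1)$ and interlacing with the zeros $y_{i,n}$ of the orthogonal polynomial $P_n^{(\alpha,\beta+1)}$. I would use the chain in~\eqref{2.1} for index $n$ and the chain in~\eqref{2.2} relating degrees $n$ and $n+1$ as a common yardstick: both $P_n^{(\alpha,\beta)}$ and $P_{n+1}^{(\alpha,\beta)}$ are compared against the \emph{same} orthogonal sequence $\big\{P_n^{(\alpha,\beta+1)}\big\}$. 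Thus the positions of the interior zeros $x_{i,n}$ and $x_{i,n+1}$ (for $i\geq 2$) relative to the common points $y_{j,n}$ are both pinned down by the lemma, and combining the two interlacing chains should yield the interior portion of~\eqref{2.3}, namely $-1<x_{2,n+1}<x_{2,n}<\cdots<x_{n,n+1}<x_{n,n}<x_{n+1,n+1}<1$.

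The part not handled directly by Lemma~\ref{bdr} is the behaviour of the two zeros lying to the left of $-1$, i.e.\ establishing $x_{1,n}<x_{1,n+1}<-1$. For this I would turn to the three-term recurrence~\eqref{1} and evaluate it at a strategically chosen point. The natural choice is $x=-1$: writing the recurrence at $x=-1$ relates $P_{n+1}^{(\alpha,\beta)}(-1)$, $P_n^{(\alpha,\beta)}(-1)$ and $P_{n-1}^{(\alpha,\beta)}(-1)$, and one can compute the signs of $P_n^{(\alpha,\beta)}(-1)$ explicitly (Jacobi polynomials have a closed form at the endpoints). Because each $P_n^{(\alpha,\beta)}$ has exactly one zero below $-1$ and is a polynomial of known leading-coefficient sign, the sign of $P_n^{(\alpha,\beta)}(-1)$ together with the sign at $-\infty$ determines whether $x_{1,n}$ lies to the left or right of a given point. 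Comparing $x_{1,n}$ and $x_{1,n+1}$ then reduces to comparing these endpoint values, or alternatively to a mixed three-term relation evaluated at $x_{1,n+1}$ to show $P_n^{(\alpha,\beta)}(x_{1,n+1})$ has the sign forcing $x_{1,n}<x_{1,n+1}$.

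I expect the ordering of the two sub-$(-1)$ zeros to be the main obstacle. The interior interlacing is essentially bookkeeping once both chains~\eqref{2.1} and~\eqref{2.2} are in hand, but separating $x_{1,n}$ from $x_{1,n+1}$ requires genuinely new information beyond the lemma, since the lemma only locates these zeros as being less than $-1$ without ordering them across degrees. The cleanest route is likely a sign-change argument using the recurrence~\eqref{1}, but one must be careful that the recurrence coefficients $c_n,d_n,e_n$ in~\eqref{d} behave well in the quasi-orthogonal range $-2<\beta<-1$ (in particular $e_n$ involves $(n+\beta-1)$, whose sign should be tracked). An alternative is to exploit the symmetry~\eqref{2.6} together with known monotonicity of the extreme zero, but I would favour the direct recurrence computation.

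Finally, for the Corollary I would argue that interlacing of the zeros of $P_{n-k}^{(\alpha,\beta)}$ and $P_n^{(\alpha,\beta)}$ fails because each polynomial in the sequence carries exactly one zero to the left of $-1$. By~\eqref{2.3} the single sub-$(-1)$ zeros $x_{1,m}$ form a monotone (here, increasing in $m$) sequence, so for $n\geq 3$ and $k\in\{1,\dots,n-1\}$ the two polynomials $P_{n-k}^{(\alpha,\beta)}$ and $P_n^{(\alpha,\beta)}$ each contribute one zero in $(-\infty,-1)$, and these two zeros, together with the forced interlacing pattern of the interior zeros, cannot be arranged into a single strictly alternating sequence of the type required by Definition~\ref{SI}. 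Concretely, one shows that the presence of two zeros in $(-\infty,-1)$ with no zero of the other polynomial separating them in the required alternating fashion violates the interlacing definition, which completes the proof.
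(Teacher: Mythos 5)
Your reduction of the Corollary itself is the same as the paper's: once one knows from \eqref{2.3} that the zeros smaller than $-1$ satisfy $x_{1,n-k}<x_{1,n-k+1}<\dots<x_{1,n}<-1$, the smallest zero of $P_{n-k}^{(\alpha,\beta)}$ lies outside the interval $(x_{1,n},x_{n,n})$ spanned by the zeros of $P_{n}^{(\alpha,\beta)}$, so neither consecutive-degree interlacing nor Stieltjes interlacing can hold. The genuine gap is in your proposed proof of Theorem~\ref{Th:2.1}. The interior part of \eqref{2.3} does \emph{not} follow from ``combining'' the chains \eqref{2.1} and \eqref{2.2}: those chains say only that, for $2\le i\le n$, both $x_{i,n}$ and $x_{i,n+1}$ lie in the \emph{same} gap $(y_{i-1,n},y_{i,n})$ between consecutive zeros of $P_n^{(\alpha,\beta+1)}$; they carry no information about the relative order of $x_{i,n}$ and $x_{i,n+1}$ inside that gap, and the inequality $x_{i,n+1}<x_{i,n}$ is precisely what has to be proved. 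The paper obtains it (simultaneously with $x_{1,n}<x_{1,n+1}$) by evaluating the mixed recurrence $\frac12(2+\alpha+\beta+2n)(x+1)P_n^{(\alpha,\beta+1)}(x)=(n+1)P_{n+1}^{(\alpha,\beta)}(x)+(1+\beta+n)P_n^{(\alpha,\beta)}(x)$ at successive zeros of $P_n^{(\alpha,\beta)}$ and counting sign changes of $P_{n+1}^{(\alpha,\beta)}$, using \eqref{2.1} and \eqref{2.2} only to fix the signs of the various factors.

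Fortunately, the Corollary needs none of the interior interlacing, only the monotonicity $x_{1,m}<x_{1,m+1}$, and of your two suggested routes to it one fails and one succeeds. Comparing values at the endpoint gives nothing: $P_m^{(\alpha,\beta)}(-1)=(-1)^m\frac{(\beta+1)(\beta+2)\cdots(\beta+m)}{m!}$ has sign $(-1)^{m+1}$ for every $m$ (only the factor $\beta+1$ is negative), while $P_m^{(\alpha,\beta)}$ has sign $(-1)^m$ near $-\infty$; these data have the same form for degrees $n$ and $n+1$, so they cannot order $x_{1,n}$ against $x_{1,n+1}$, and the recurrence \eqref{1} evaluated at $x=-1$ merely relates these values. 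Your alternative does work: evaluating the mixed relation above at $x=x_{1,n+1}$ kills the $P_{n+1}^{(\alpha,\beta)}$ term and yields $(1+\beta+n)P_n^{(\alpha,\beta)}(x_{1,n+1})=\frac12(2+\alpha+\beta+2n)(x_{1,n+1}+1)P_n^{(\alpha,\beta+1)}(x_{1,n+1})$; since $1+\beta+n>0$, $x_{1,n+1}+1<0$, and $P_n^{(\alpha,\beta+1)}$ (all of whose zeros lie in $(-1,1)$) has sign $(-1)^n$ at $x_{1,n+1}$, the value $P_n^{(\alpha,\beta)}(x_{1,n+1})$ has sign $(-1)^{n+1}$, which is exactly the sign $P_n^{(\alpha,\beta)}$ takes on $(x_{1,n},x_{2,n})$; hence $x_{1,n}<x_{1,n+1}<-1$. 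With this argument substituted for the chain-combination step, your derivation of the Corollary is complete and coincides with the paper's.
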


\begin{proof}
It follows immediately from Def\/inition \ref{SI} that Stieltjes interlacing
 does not hold between the zeros of two polynomials if any zero of the polynomial of smaller
 degree lies outside the interval with endpoints at the smallest and largest zero of the
 polynomial of larger degree. Since~\eqref{2.3} shows that $x_{1,n-2} < x_{1,n-1} < x_{1,n} <
 -1 < x_{n,n}$ for each $n \in{\mathbb N}$, the smallest zero of~$P_{n-2}^{(\alpha,\beta)}$ lies outside the interval $(x_{1,n},x_{n,n})$ and this proves the result.
\end{proof}

\begin{remark}
Theorem \ref{Th:2.1} complements results proved by Dimitrov, Ismail and Rafaeli~\cite{DiIsRa} who consider the interlacing properties of zeros of orthogonal polynomials arising from perturbations of the weight function of orthogonality. However, the sequences of polynomials considered in \cite{DiIsRa} retain orthogonality. Shifting from the orthogonal case to the quasi-orthogonal order~$1$ Jacobi case $P_n^{(\alpha,\beta)}$ with $\alpha>-1$, $-2<\beta<-1$ may be viewed as a perturbation of the (orthogonal) Jacobi weight function ${(1-x)}^{\alpha}{(1+x)}^{\beta}$, $\alpha,\beta>-1$, by the factor ${(1+x)}^{-1}$.
\end{remark}

\begin{remark}\label{rem} Relation~\eqref{2.3} proves that for each f\/ixed $\alpha$ and $\beta$ with $\alpha >-1$ and $-2 < \beta< -1$, the zero of~$P_{n}^{(\alpha,\beta)}$ that is less than $-1$, increases with~$n$.
\end{remark}

\begin{corollary} \label{cor:2.2}
For each fixed $\alpha$, $\beta$ with $-2 < \alpha < -1$ and $\beta > -1$, and each $n \in
{\mathbb N}$, $n \geq 2$,
\begin{itemize}\itemsep=0pt
\item[$(i)$]the $n+1$ zeros of $(1-x) P_{n}^{(\alpha,\beta)}(x)$ interlace with the
$n+1$ zeros of $P_{n+1}^{(\alpha,\beta)}(x)$;
\item[$(ii)$] the $n-1$ zeros of $P_{n}^{(\alpha,\beta)}$ that
lie in the interval $(-1,1)$ interlace with the $n$ zeros of $P_{n+1}^{(\alpha,\beta)}$ that lie
in the interval $(-1,1)$;
\item[$(iii)$] interlacing does not hold between all the real zeros of $P_{n}^{(\alpha,\beta)}$ and all the real zeros of
$P_{n+1}^{(\alpha,\beta)}$ for any $n \in {\mathbb N}$, $n \geq 2$;
\item[$(iv)$] the zero of $P_n^{(\alpha,\beta)}$ that is $>1$ decreases with~$n$.
\end{itemize}
\end{corollary}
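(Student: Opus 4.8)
The plan is to derive every assertion as the mirror image, under the reflection $x\mapsto -x$, of a statement already established for the complementary parameter range $\alpha>-1$, $-2<\beta<-1$; the engine is the symmetry relation~\eqref{2.6}. Given $\alpha$, $\beta$ with $-2<\alpha<-1$ and $\beta>-1$, the polynomial $P_n^{(\beta,\alpha)}$ has first parameter $>-1$ and second parameter in $(-2,-1)$, so Theorem~\ref{Th:2.1} and its consequences apply to the sequence $\big\{P_n^{(\beta,\alpha)}\big\}$. Writing $x_{1,n}<\dots<x_{n,n}$ for the zeros of $P_n^{(\beta,\alpha)}$, relation~\eqref{2.6} shows that the zeros of $P_n^{(\alpha,\beta)}$ are exactly $-x_{n,n}<\dots<-x_{1,n}$, so the entire zero configuration of $P_n^{(\alpha,\beta)}$ is the reflection of that of $P_n^{(\beta,\alpha)}$.

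First I would record how the reflection acts on the landmarks of~\eqref{2.3}. The map $x\mapsto -x$ fixes $(-1,1)$ setwise, sends $-1$ to $1$, and sends $(-\infty,-1)$ onto $(1,\infty)$. Hence the unique zero $x_{1,n}<-1$ of $P_n^{(\beta,\alpha)}$ reflects to the unique zero $-x_{1,n}>1$ of $P_n^{(\alpha,\beta)}$, the remaining $n-1$ zeros in $(-1,1)$ reflect to the $n-1$ zeros of $P_n^{(\alpha,\beta)}$ in $(-1,1)$, and the boundary factor $1+x$ attached to $P_n^{(\beta,\alpha)}$ becomes $1-x$. Since $x\mapsto -x$ is an order-reversing bijection of $\mathbb{R}$, it carries an interlacing configuration of two equal-sized sets of reals to an interlacing configuration; thus each part will follow by negating and reversing the relevant chain of inequalities coming from~\eqref{2.3}.

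Concretely: for~$(i)$, the interlacing of the $n+1$ zeros of $(1+x)P_n^{(\beta,\alpha)}(x)$ with the $n+1$ zeros of $P_{n+1}^{(\beta,\alpha)}(x)$ (which is~\eqref{2.3} together with the extra node at $-1$) reflects to the interlacing of the $n+1$ zeros of $(1-x)P_n^{(\alpha,\beta)}(x)$ with the $n+1$ zeros of $P_{n+1}^{(\alpha,\beta)}(x)$, using $P_{n+1}^{(\alpha,\beta)}(x)=(-1)^{n+1}P_{n+1}^{(\beta,\alpha)}(-x)$. For~$(ii)$, the interlacing in $(-1,1)$ of the $n-1$ interior zeros of $P_n^{(\beta,\alpha)}$ with the $n$ interior zeros of $P_{n+1}^{(\beta,\alpha)}$, read off from the portion of~\eqref{2.3} to the right of $-1$, is preserved (with order reversed) under reflection because $(-1,1)$ is invariant. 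For~$(iv)$, Remark~\ref{rem} gives that the zero $x_{1,n}<-1$ of $P_n^{(\beta,\alpha)}$ increases with $n$, so its reflection $-x_{1,n}>1$, namely the zero of $P_n^{(\alpha,\beta)}$ exceeding $1$, decreases with $n$. Finally~$(iii)$ uses the same outside-the-span criterion invoked in the proof of the corollary following Theorem~\ref{Th:2.1}: reflecting $x_{1,n}<x_{1,n+1}<-1$ yields $1<-x_{1,n+1}<-x_{1,n}$, so the largest zero of the lower-degree polynomial $P_n^{(\alpha,\beta)}$ lies to the right of the largest zero of $P_{n+1}^{(\alpha,\beta)}$, hence outside the span of the latter's zeros, which rules out interlacing.

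I expect no genuine obstacle: the mathematical content of all four parts is already contained in Theorem~\ref{Th:2.1} and Remark~\ref{rem}, and the symmetry principle was announced explicitly in the introduction. The only point demanding care is the index bookkeeping under reflection---tracking that negation reverses the ordering of the zeros and that the boundary factor switches from $1+x$ to $1-x$---which is routine.
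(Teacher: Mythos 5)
Your proposal is correct and follows exactly the paper's route: the paper's entire proof is the single sentence that the result follows from Theorem~\ref{Th:2.1} and the symmetry property~\eqref{2.6}, and your argument is simply a careful expansion of that reflection argument (including the use of Remark~\ref{rem}, itself a consequence of~\eqref{2.3}, for part~$(iv)$).
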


\begin{proof} The result follows from Theorem~\ref{Th:2.1} and the symmetry property (\ref{2.6}) of Jacobi polyno\-mials.
\end{proof}

\subsection[Co-primality and zeros of $P_{n}^{(\alpha,\beta)}$ and $P_{n-k}^{(\alpha,\beta)}$, $k,n\in\mathbb{N}$, $2\leq k<n$]{Co-primality and zeros of $\boldsymbol{P_{n}^{(\alpha,\beta)}}$ and $\boldsymbol{P_{n-k}^{(\alpha,\beta)}}$, $\boldsymbol{k,n\in\mathbb{N}}$, $\boldsymbol{2\leq k<n}$}

Common zeros of two polynomials, should they exist,
play a crucial role when discussing interlacing properties of their zeros, see, for example, \cite{DrMu}. The polynomials $P_{n}^{(\alpha,\beta)}$ and $P_{n-1}^{(\alpha,\beta)}$ of consecutive degree are co-prime for each $n \in{\mathbb N}$, $n \geq 1$, and each f\/ixed~$\alpha$,~$\beta$ with $\alpha > -1$ and $-2
< \beta< -1$. This follows from Theorem~\ref{Th:2.1} but is also immediate from the three term recurrence
relation~\eqref{1} since if $P_{n}^{(\alpha,\beta)}$ and $P_{n-1}^{(\alpha,\beta)}$ had a common
zero, this would also be a zero of $P_{n-2}^{(\alpha,\beta)}$. After suitable
iteration of~(\ref{1}), this contradicts $P_{0}^{(\alpha,\beta)} (x) \equiv 1$.
\begin{theorem}\label{Th:3.1} Let $\big\{P_{n}^{(\alpha, \beta)}\big\}_{n=0}^\infty$ denote the sequence of Jacobi polynomials and fix~$\alpha$ and $\beta$ with $\alpha > -1$ and $-2 < \beta< -1$. If $P_{n}^{(\alpha,\beta)}$ and
 $P_{n-2}^{(\alpha,\beta)}$ are co-prime for each $n \in{\mathbb N}$, $n \geq 3$, then the zeros of $(x+1)(x-d_n)P_{n-2}^{(\alpha,\beta)}$ interlace with the zeros of $P_{n}^{(\alpha,\beta)}$ where $d_n$ is given in~\eqref{d}.
\end{theorem}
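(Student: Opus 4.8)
The plan is to evaluate the degree-$n$ polynomial $g(x):=(x+1)(x-d_n)P_{n-2}^{(\alpha,\beta)}(x)$ at the $n$ zeros of $P_n^{(\alpha,\beta)}$ and to read off the interlacing from the resulting sign pattern, using the three-term recurrence~\eqref{1} to eliminate $P_{n-2}^{(\alpha,\beta)}$ in favour of $P_{n-1}^{(\alpha,\beta)}$ at those points. At a zero $x_{k,n}$ of $P_n^{(\alpha,\beta)}$ the relation~\eqref{1} collapses to $e_nP_{n-2}^{(\alpha,\beta)}(x_{k,n})=(x_{k,n}-d_n)P_{n-1}^{(\alpha,\beta)}(x_{k,n})$, so that
\[
g(x_{k,n})=\frac{(x_{k,n}+1)(x_{k,n}-d_n)^2}{e_n}\,P_{n-1}^{(\alpha,\beta)}(x_{k,n}).
\]
Since $e_n>0$ for every $n\ge 3$ in the range $\alpha>-1$, $-2<\beta<-1$, the sign of $g(x_{k,n})$ will coincide with that of $(x_{k,n}+1)P_{n-1}^{(\alpha,\beta)}(x_{k,n})$ as soon as the factor $(x_{k,n}-d_n)^2$ is known to be strictly positive.

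This is exactly where the co-primality hypothesis enters. Evaluating~\eqref{1} at $x=d_n$ gives $c_nP_n^{(\alpha,\beta)}(d_n)=-e_nP_{n-2}^{(\alpha,\beta)}(d_n)$, and since $e_n\ne 0$ this shows that $d_n$ is a zero of $P_n^{(\alpha,\beta)}$ precisely when it is a zero of $P_{n-2}^{(\alpha,\beta)}$; the assumed co-primality rules this out, so $x_{k,n}\ne d_n$ for all $k$ and $(x_{k,n}-d_n)^2>0$. The co-primality of consecutive polynomials noted before the theorem guarantees $P_{n-1}^{(\alpha,\beta)}(x_{k,n})\ne 0$, while $x_{k,n}\ne -1$ because $-1$ is not a zero of $P_n^{(\alpha,\beta)}$; hence $g(x_{k,n})\ne 0$ for each $k$.

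Next I would show that $(x_{k,n}+1)P_{n-1}^{(\alpha,\beta)}(x_{k,n})$ changes sign with $k$. The zeros of the degree-$n$ polynomial $(x+1)P_{n-1}^{(\alpha,\beta)}(x)$ are $-1$ together with the zeros of $P_{n-1}^{(\alpha,\beta)}$, and inserting $-1$ into the string~\eqref{2.3} with $n$ replaced by $n-1$ yields
\[
x_{1,n-1}<x_{1,n}<-1<x_{2,n}<x_{2,n-1}<x_{3,n}<\cdots<x_{n-1,n-1}<x_{n,n},
\]
which exhibits $(x+1)P_{n-1}^{(\alpha,\beta)}$ and $P_n^{(\alpha,\beta)}$ as interlacing. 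Consequently $(x+1)P_{n-1}^{(\alpha,\beta)}$, and therefore $g$, takes opposite signs at consecutive zeros of $P_n^{(\alpha,\beta)}$, so $g$ has at least one zero in each of the $n-1$ intervals $(x_{k,n},x_{k+1,n})$, $k=1,\dots,n-1$.

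Finally I would close the argument by a degree count. The zeros of $g$ are $-1$, $d_n$ and the $n-2$ zeros of $P_{n-2}^{(\alpha,\beta)}$; the co-primality argument above, together with $P_{n-2}^{(\alpha,\beta)}(-1)\ne 0$ and $d_n\in(-1,1)$, shows that these are $n$ distinct points. The smallest of them is the zero $x_{1,n-2}$ of $P_{n-2}^{(\alpha,\beta)}$ lying below $-1$, and by the monotonicity recorded in Remark~\ref{rem} one has $x_{1,n-2}<x_{1,n-1}<x_{1,n}$, so $g$ has a zero strictly below $x_{1,n}$ in addition to the $n-1$ zeros already located in the gaps. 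This accounts for all $n$ zeros of $g$, forcing exactly one in each interval $(x_{k,n},x_{k+1,n})$ and exactly one below $x_{1,n}$, which is the asserted interlacing. The delicate point, and the one I would treat most carefully, is precisely this bookkeeping at the lower end: one must use co-primality both to keep the $n$ zeros of $g$ distinct and to guarantee that the remaining zero falls \emph{below} $x_{1,n}$ rather than inside the gaps, so that the global count is exactly $n$ and the interlacing is strict.
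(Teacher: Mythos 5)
Your strategy is fundamentally the same as the paper's: both proofs evaluate the recurrence~\eqref{1} at the zeros of $P_n^{(\alpha,\beta)}$, feed in the interlacing from Theorem~\ref{Th:2.1}, and finish with a zero count. Your packaging is in fact tidier: the paper works with the ratio~\eqref{4.1} only on the $n-2$ gaps inside $(-1,1)$, has to treat the single gap that may contain $d_n$ as exceptional, and then places $d_n$ by pigeonhole, whereas your substitution $e_nP_{n-2}^{(\alpha,\beta)}(x_{k,n})=(x_{k,n}-d_n)P_{n-1}^{(\alpha,\beta)}(x_{k,n})$ produces the square $(x_{k,n}-d_n)^2$, so the sign of $g(x_{k,n})$ is that of $(x_{k,n}+1)P_{n-1}^{(\alpha,\beta)}(x_{k,n})$ uniformly in $k$, and neither the gap containing $-1$ nor the one containing $d_n$ needs separate discussion. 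That part of your argument is correct, including the use of co-primality to rule out $x_{k,n}=d_n$.

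There is, however, one unjustified step on which your final count genuinely depends: the assertion $d_n\in(-1,1)$. You need it (really only $d_n>-1$) to conclude that the $n$ listed zeros of $g$ are distinct -- in particular $d_n\neq-1$, for otherwise $g$ has a double zero at $-1$ and only $n-1$ distinct zeros, and the pigeonhole ``$n$ distinct zeros, $n-1$ gaps plus one point below $x_{1,n}$'' collapses -- and to know that $x_{1,n-2}$ is the smallest zero of $g$. This is not automatic in the present setting: for $-2<\beta<-1$ the weight $(1-x)^{\alpha}(1+x)^{\beta}$ is not integrable at $-1$ and the sequence is only quasi-orthogonal, so the standard fact that the recurrence coefficients of polynomials orthogonal on $(-1,1)$ lie in $(-1,1)$ does not apply. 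The claim is true but requires a computation: from~\eqref{d} one checks that
\begin{gather*}
1+d_n=\frac{2\left[(\beta+n)(\alpha+\beta+n)+(n-1)(n+\alpha-1)-(1+\beta)\right]}{(2n+\alpha+\beta-2)(2n+\alpha+\beta)},
\end{gather*}
and each of the three summands in the bracket, as well as the denominator, is positive for $n\geq3$, $\alpha>-1$, $-2<\beta<-1$. Alternatively, you can avoid the issue (and the whole distinctness discussion) by counting with multiplicity: each gap $(x_{k,n},x_{k+1,n})$ contains an odd number of zeros of $g$ counted with multiplicity because $g$ has opposite nonzero signs at its endpoints, the point $x_{1,n-2}<x_{1,n}$ contributes at least one further zero, and $\deg g=n$ then forces exactly one simple zero in each gap and a simple zero at $x_{1,n-2}$, so distinctness emerges as a conclusion rather than being needed as an input. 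With either patch your proof is complete.
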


\begin{remark}We note that results analogous to Theorem~\ref{Th:3.1} can be proven for the zeros of Jacobi polynomials $P_{n}^{(\alpha,\beta)}$ and
 $P_{n-k}^{(\alpha,\beta)}$ when $k,n\in\mathbb{N}$, $3\leq k < n$.
 \end{remark}

\section{An extension of the Askey conjecture}\label{sectionAskey}

\subsection[Zeros of $P_{n}^{(\alpha,\beta)}$ and $P_{n}^{(\alpha,\beta+2)}$, $n\in\mathbb{N}$]{Zeros of $\boldsymbol{P_{n}^{(\alpha,\beta)}}$ and $\boldsymbol{P_{n}^{(\alpha,\beta+2)}}$, $\boldsymbol{n\in\mathbb{N}}$}

We investigate an extension of the Askey conjecture that the zeros of the Jacobi polyno\-mials~$P_{n}^{(\alpha,\beta)}$ and $P_{n}^{(\alpha,\beta+2)}$ are interlacing when $\alpha > -1$, $-2 < \beta< -1$ and prove a necessary and suf\/f\/icient condition for interlacing between the zeros of these two polynomials to occur.

\begin{theorem}\label{Th:Askey1}
Suppose that $\alpha > -1$, $-2 < \beta< -1$, and $\big\{P_n^{(\alpha,\beta)}\big\}_{n=0}^\infty$ is the sequence of Jacobi polynomials. Let $\delta:= -1-\frac{2(\beta+1)}{\alpha +\beta+2n +2}$. For each $n \in{\mathbb N}$, the zeros of $P_{n}^{(\alpha,\beta)}$ and $P_{n}^{(\alpha,\beta+2)}$ are interlacing if and only if $\delta < x_{2,n}$, where $x_{2,n}$ is the smallest zero of~$P_n^{(\alpha,\beta)}$ in the interval $(-1,1)$.
\end{theorem}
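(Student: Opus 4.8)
The plan is to reduce the whole statement to a single \emph{mixed three-term relation} that expresses the quasi-orthogonal polynomial $P_n^{(\alpha,\beta)}$ in terms of the two consecutive \emph{orthogonal} polynomials $P_n^{(\alpha,\beta+2)}$ and $P_{n-1}^{(\alpha,\beta+2)}$; note that $\beta+2>-1$, so $\big\{P_k^{(\alpha,\beta+2)}\big\}$ is a genuine orthogonal family on $(-1,1)$. First I would obtain such a relation by iterating twice the contiguous relation
\begin{gather*}
(2n+\alpha+\beta+1)P_n^{(\alpha,\beta)}(x)=(n+\alpha+\beta+1)P_n^{(\alpha,\beta+1)}(x)+(n+\alpha)P_{n-1}^{(\alpha,\beta+1)}(x),
\end{gather*}
applied first at level $\beta$ and then at level $\beta+1$; this writes $P_n^{(\alpha,\beta)}$ as a linear combination of $P_n^{(\alpha,\beta+2)}$, $P_{n-1}^{(\alpha,\beta+2)}$ and $P_{n-2}^{(\alpha,\beta+2)}$. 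Using the three-term recurrence~\eqref{1} for the family $\big\{P_k^{(\alpha,\beta+2)}\big\}$ to eliminate $P_{n-2}^{(\alpha,\beta+2)}$ then yields a relation of the shape
\begin{gather*}
P_n^{(\alpha,\beta)}(x)=A_n\,P_n^{(\alpha,\beta+2)}(x)+B_n\,(x-\delta)\,P_{n-1}^{(\alpha,\beta+2)}(x).
\end{gather*}
The main computational task, and the technical heart of the proof, is to check that the linear factor vanishes exactly at $\delta=-1-\frac{2(\beta+1)}{\alpha+\beta+2n+2}$ and that $B_n=\frac{n+\alpha}{2(n+\beta+1)}>0$; positivity of $B_n$ (and of the auxiliary factors $2n+\alpha+\beta+1$, $n+\alpha$, $n+\beta+1$ met en route) is precisely where $\alpha>-1$, $-2<\beta<-1$ and $n\geq2$ are used.

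With this relation available, let $z_{1,n}<\dots<z_{n,n}$ be the zeros of $P_n^{(\alpha,\beta+2)}$, all in $(-1,1)$. Evaluating the relation at $z_{j,n}$ annihilates the first term, so $P_n^{(\alpha,\beta)}(z_{j,n})$ has the same sign as $(z_{j,n}-\delta)P_{n-1}^{(\alpha,\beta+2)}(z_{j,n})$, since $B_n>0$. Because the family is orthogonal, the zeros of $P_{n-1}^{(\alpha,\beta+2)}$ interlace those of $P_n^{(\alpha,\beta+2)}$, so $P_{n-1}^{(\alpha,\beta+2)}$ alternates in sign along $z_{1,n},\dots,z_{n,n}$. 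Hence, if $\delta<z_{1,n}$, each factor $z_{j,n}-\delta$ is positive and $P_n^{(\alpha,\beta)}$ alternates in sign on $z_{1,n},\dots,z_{n,n}$; since $P_n^{(\alpha,\beta)}$ has exactly $n-1$ zeros in $(-1,1)$ by Lemma~\ref{bdr} and Theorem~\ref{Th:2.1}, these must fall one into each of the $n-1$ gaps $(z_{j,n},z_{j+1,n})$, which together with $x_{1,n}<-1<z_{1,n}$ is precisely the interlacing of Definition~\ref{SI}. This settles the sufficiency half, \emph{provided} I can upgrade the hypothesis $\delta<x_{2,n}$ to $\delta<z_{1,n}$.

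That upgrade, and the necessity half, both come from evaluating the relation at the single point $z_{1,n}$. As $z_{1,n}$ lies to the left of every zero of $P_{n-1}^{(\alpha,\beta+2)}$, the value $P_{n-1}^{(\alpha,\beta+2)}(z_{1,n})$ has sign $(-1)^{n-1}$, whence $P_n^{(\alpha,\beta)}(z_{1,n})$ has the sign of $(z_{1,n}-\delta)(-1)^{n-1}$. For necessity, if the zeros interlace then $x_{1,n}<z_{1,n}<x_{2,n}$, so $z_{1,n}$ lies in $(x_{1,n},x_{2,n})$, where $P_n^{(\alpha,\beta)}$ has sign $(-1)^{n-1}$; comparing with the displayed sign forces $z_{1,n}-\delta>0$, and therefore $\delta<z_{1,n}<x_{2,n}$. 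For the converse, suppose $\delta<x_{2,n}$ but, contrary to what is wanted, $\delta\geq z_{1,n}$. If $\delta>z_{1,n}$, the sign formula gives $P_n^{(\alpha,\beta)}(z_{1,n})$ the sign $(-1)^n$, so $z_{1,n}\notin(x_{1,n},x_{2,n})$; since $z_{1,n}>-1>x_{1,n}$ this forces $z_{1,n}\geq x_{2,n}$, hence $\delta>z_{1,n}\geq x_{2,n}$, contradicting $\delta<x_{2,n}$. If $\delta=z_{1,n}$ then $P_n^{(\alpha,\beta)}(z_{1,n})=0$, so $z_{1,n}$ is a common zero lying in $(-1,1)$, i.e.\ $z_{1,n}=x_{j,n}$ for some $j\geq2$, again giving $\delta=z_{1,n}\geq x_{2,n}$, a contradiction. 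Thus $\delta<x_{2,n}$ implies $\delta<z_{1,n}$, which by the previous paragraph yields interlacing.

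The genuine obstacle is therefore confined to the first paragraph: assembling the mixed relation and confirming that its linear coefficient has root exactly $\delta$ with positive factor $B_n$. Once that identity is in place the remainder is sign bookkeeping at the zeros of $P_n^{(\alpha,\beta+2)}$, the only delicate points being the behaviour near $x=-1$ (controlled by the single zero $x_{1,n}<-1$ from Theorem~\ref{Th:2.1}) and the degenerate case $\delta=z_{1,n}$, which the relation itself disposes of by exhibiting a common zero.
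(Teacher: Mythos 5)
Your proposal is correct, and it takes a genuinely different route from the paper. I checked the identity at the heart of your first paragraph: iterating the contiguous relation twice and eliminating $P_{n-2}^{(\alpha,\beta+2)}$ via the recurrence~\eqref{1} for the family $\big\{P_k^{(\alpha,\beta+2)}\big\}$ does produce
\begin{gather*}
P_n^{(\alpha,\beta)}(x)=\frac{(\beta+1)(n+\alpha+\beta+2)}{(n+\beta+1)(2n+\alpha+\beta+2)}\,P_n^{(\alpha,\beta+2)}(x)+\frac{n+\alpha}{2(n+\beta+1)}\,(x-\delta)\,P_{n-1}^{(\alpha,\beta+2)}(x),
\end{gather*}
so the linear factor does vanish exactly at $\delta$ and $B_n=\frac{n+\alpha}{2(n+\beta+1)}>0$ as you assert; with that in hand your sign bookkeeping, the upgrade $\delta<x_{2,n}\Leftrightarrow\delta<z_{1,n}$, and the treatment of $\delta=z_{1,n}$ are all sound (for $n\geq 2$, which is where the theorem has content, since $x_{2,n}$ must exist and the leading coefficient of $P_n^{(\alpha,\beta)}$ is then positive). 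The paper proceeds differently: it uses the single ready-made relation \eqref{5.1}, which links $P_n^{(\alpha,\beta)}$, $P_n^{(\alpha,\beta+1)}$ and $P_n^{(\alpha,\beta+2)}$ all of degree $n$, evaluates it at consecutive zeros of $P_n^{(\alpha,\beta)}$ (not of $P_n^{(\alpha,\beta+2)}$ as you do), and draws the needed sign alternation from the quasi-orthogonal interlacing of Lemma~\ref{bdr}; co-primality is settled first by noting that \eqref{5.1} forces any common zero of $P_n^{(\alpha,\beta)}$ and $P_n^{(\alpha,\beta+2)}$ to equal $\delta$. The trade-off is clear: the paper's argument needs essentially no new computation, since \eqref{5.1} is quoted from Rainville and Lemma~\ref{bdr} is already available, whereas your route front-loads the work into assembling the two-term identity. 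In exchange, your identity makes the structure more transparent---$\delta$ appears explicitly as the only possible common zero, the sign analysis rests only on classical interlacing inside the orthogonal family $\big\{P_k^{(\alpha,\beta+2)}\big\}$, and you obtain as a by-product the sharper localization that the interlacing condition $\delta<x_{2,n}$ is equivalent to $\delta<z_{1,n}$, i.e., $\delta$ lies to the left of all zeros of $P_n^{(\alpha,\beta+2)}$, which the paper's proof does not state.
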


\begin{remark} Numerical evidence conf\/irms that the assumption in Theorem~\ref{Th:Askey1}, i.e., $\delta<x_{2,n}$, is reasonable. There are values of $\alpha$ and $\beta$ for which the condition is satisf\/ied and others where it is not. For example, when $n=5$, $\alpha=2.35$ and $\beta =-1.5$ we have $\delta=-0.922179$ and $x_{2,n}=-0.885666$ whereas for the same $n$ and $\alpha$ with $\beta =-1.9$ we have $\delta=-0.855422$ and $x_{2,n}=-0.961637$. Analytically one can see that the condition is more likely to be satisf\/ied when $\delta$ approaches~$-1$, a lower bound for~$x_{2,n}$, i.e., when $\beta \to -1$ with $\alpha>-1$ and $n\in\mathbb{N}$ f\/ixed.
\end{remark}

Although full interlacing between the zeros of $P_n^{(\alpha,\beta)}$ and $P_n^{(\alpha,\beta+2)}$ cannot occur when $\delta \geq x_{2,n}$, there is an interlacing result, involving the point~$\delta$, that holds between the zeros of $P_n^{(\alpha,\beta)}$ that lie in the interval $(-1,1)$ and the zeros of $P_n^{(\alpha,\beta+2)}$ provided the two polynomials have no common zeros.
\begin{theorem}\label{Th:Askey2}
Suppose that $\alpha > -1$, $-2 < \beta < -1$ and $\big\{P_n^{(\alpha,\beta)}\big\}_{n=0}^\infty$ is the sequence of Jacobi polynomials. Suppose that $P_n^{(\alpha,\beta)}$ and $P_n^{(\alpha,\beta+2)}$ have no common zeros and assume that $\delta:= -1-\frac{2(\beta+1)}{\alpha +\beta+2n +2}> x_{2,n}$. For each $n \in{\mathbb N}$, the zeros of $(x-\delta) P_n^{(\alpha,\beta)}(x)$ interlace with the zeros of $P_n^{(\alpha,\beta+2)}(x)$.
\end{theorem}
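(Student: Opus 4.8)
The plan is to reduce the problem to a mixed three-term relation between the two families and then run a sign-change argument at the zeros of $P_n^{(\alpha,\beta+2)}$, working in the regime $\delta>x_{2,n}$ that is complementary to Theorem~\ref{Th:Askey1}. Since the weight $(1-x)^\alpha(1+x)^{\beta+2}$ is $(1+x)^2$ times the weight for the parameters $(\alpha,\beta)$, orthogonality of the $(\alpha,\beta+2)$ family forces a finite connection $P_n^{(\alpha,\beta)}=A_nP_n^{(\alpha,\beta+2)}+B_nP_{n-1}^{(\alpha,\beta+2)}+C_nP_{n-2}^{(\alpha,\beta+2)}$ with constants $A_n,B_n,C_n$. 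Eliminating $P_{n-2}^{(\alpha,\beta+2)}$ by means of the three-term recurrence~\eqref{1} for the $(\alpha,\beta+2)$ family yields a relation of the form $P_n^{(\alpha,\beta)}(x)=G_nP_n^{(\alpha,\beta+2)}(x)+\eta_n(x-\delta)P_{n-1}^{(\alpha,\beta+2)}(x)$, which I denote $(\dagger)$; here the coefficient of $P_{n-1}^{(\alpha,\beta+2)}$ is linear in $x$, and a direct computation with the coefficients in~\eqref{d} identifies its root as exactly $\delta=-1-\frac{2(\beta+1)}{\alpha+\beta+2n+2}$. This is the same identity that underlies Theorem~\ref{Th:Askey1}, and I take it as the starting point.

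Let $\xi_1<\dots<\xi_n$ denote the zeros of the orthogonal polynomial $P_n^{(\alpha,\beta+2)}$, all lying in $(-1,1)$, and set $W(x):=(x-\delta)P_n^{(\alpha,\beta)}(x)$. Evaluating $(\dagger)$ at $\xi_j$ and multiplying by $(\xi_j-\delta)$ gives $W(\xi_j)=\eta_n(\xi_j-\delta)^2P_{n-1}^{(\alpha,\beta+2)}(\xi_j)$. The square is strictly positive (once $\delta$ is not itself a zero of $P_n^{(\alpha,\beta+2)}$), while $P_{n-1}^{(\alpha,\beta+2)}(\xi_1),\dots,P_{n-1}^{(\alpha,\beta+2)}(\xi_n)$ strictly alternate in sign because $P_{n-1}^{(\alpha,\beta+2)}$ and $P_n^{(\alpha,\beta+2)}$ are consecutive orthogonal polynomials. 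Hence $W(\xi_1),\dots,W(\xi_n)$ alternate in sign, producing a zero of $W$ in each of the $n-1$ intervals $(\xi_j,\xi_{j+1})$. The hypothesis that $P_n^{(\alpha,\beta)}$ and $P_n^{(\alpha,\beta+2)}$ share no zero makes these sign changes strict (no $\xi_j$ is a zero of $W$). The point of multiplying by $(x-\delta)$ is precisely that the factor $(\xi_j-\delta)$, whose sign need no longer be constant once $\delta>x_{2,n}$ (so that the full interlacing of Theorem~\ref{Th:Askey1} fails), is converted into the positive square $(\xi_j-\delta)^2$, restoring alternation.

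It remains to locate the remaining zeros of $W$. The polynomial $W$ has degree $n+1$, its zeros being $\delta$ together with those of $P_n^{(\alpha,\beta)}$; by Lemma~\ref{bdr} exactly one of the latter, $x_{1,n}$, satisfies $x_{1,n}<-1$, while $x_{2,n},\dots,x_{n,n},\delta$ all lie in $(-1,1)$. Since $x_{1,n}<-1<\xi_1$, one zero of $W$ sits below $\xi_1$. Next I show no zero of $W$ exceeds $\xi_n$: for $x>\xi_n$ both $P_n^{(\alpha,\beta+2)}(x)$ and $P_{n-1}^{(\alpha,\beta+2)}(x)$ are positive and $x-\delta>0$, so (checking that $G_n$ and $\eta_n$ have the same sign) the right side of $(\dagger)$ is sign-definite and $P_n^{(\alpha,\beta)}$ cannot vanish there, giving $x_{n,n}<\xi_n$; together with $\delta<\xi_n$ this places all $n$ of the zeros $x_{2,n},\dots,x_{n,n},\delta$ in $(-1,\xi_n)$. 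As $n-1$ of them already lie one per gap $(\xi_j,\xi_{j+1})$, the degree count forces exactly one to lie in $(-1,\xi_1)$ and each gap to contain exactly one, which is the asserted interlacing of the zeros of $W=(x-\delta)P_n^{(\alpha,\beta)}$ with those of $P_n^{(\alpha,\beta+2)}$.

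The routine but essential computations are the identification of the linear factor's root as exactly $\delta$ and the determination of the sign of $\eta_n$ relative to $G_n$. The genuine obstacle is the rigidity of the final count: one must rule out two zeros of $W$ lying in a single gap and ensure that none escapes beyond $\xi_n$, and it is here that the no-common-zeros hypothesis (supplying strictness of the alternation) and the regime $\delta>x_{2,n}$ (guaranteeing, via $\delta<\xi_n$, that $\delta$ is inserted genuinely among the interior zeros, complementary to Theorem~\ref{Th:Askey1}) are exactly what is required. One must also exclude the degenerate coincidences $\delta=\xi_j$ and $\delta=x_{i,n}$, which a short estimate on the size of $\delta$, together with the hypotheses, disposes of.
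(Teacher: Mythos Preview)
Your approach is genuinely different from the paper's and in one respect more economical. The paper evaluates the contiguous relation~\eqref{5.1}, which links $P_n^{(\alpha,\beta)}$, $P_n^{(\alpha,\beta+1)}$ and $P_n^{(\alpha,\beta+2)}$, at consecutive zeros $z_{i,n}$ of $P_n^{(\alpha,\beta+2)}$, feeds in the interlacing of $P_n^{(\alpha,\beta+1)}$ with $P_n^{(\alpha,\beta+2)}$ taken from~\cite{DrJoMb}, and then splits into two cases according to whether or not $\delta$ falls inside some gap $(z_{j,n},z_{j+1,n})$. Your identity $(\dagger)$ instead uses $P_{n-1}^{(\alpha,\beta+2)}$, so the only interlacing input you need is the classical one for consecutive orthogonal polynomials; and your square trick $W(\xi_j)=\eta_n(\xi_j-\delta)^2P_{n-1}^{(\alpha,\beta+2)}(\xi_j)$ delivers strict alternation in one stroke, with no case split. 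Both routes arrive at the same configuration: two zeros of $W$ below $\xi_1$ (one being $x_{1,n}<-1$) and exactly one in each gap $(\xi_j,\xi_{j+1})$.

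There is, however, a concrete error in your endgame. Combining~\eqref{5.1} with the connection $(2n+\alpha+\beta+2)P_n^{(\alpha,\beta+1)}=(n+\alpha+\beta+2)P_n^{(\alpha,\beta+2)}+(n+\alpha)P_{n-1}^{(\alpha,\beta+2)}$ produces your $(\dagger)$ explicitly:
\[
P_n^{(\alpha,\beta)}=\frac{(\beta+1)(n+\alpha+\beta+2)}{(\beta+n+1)(\alpha+\beta+2n+2)}\,P_n^{(\alpha,\beta+2)}+\frac{n+\alpha}{2(\beta+n+1)}\,(x-\delta)\,P_{n-1}^{(\alpha,\beta+2)} .
\]
In the range $\alpha>-1$, $-2<\beta<-1$, $n\ge 1$ one has $G_n<0$ while $\eta_n>0$, so $G_n$ and $\eta_n$ do \emph{not} have the same sign, and your sign-definiteness argument that ``$P_n^{(\alpha,\beta)}$ cannot vanish for $x>\xi_n$'' fails as written. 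The repair is easy and stays within your framework: since $\eta_n>0$ and $P_{n-1}^{(\alpha,\beta+2)}(\xi_n)>0$, your formula for $W(\xi_j)$ gives $W(\xi_n)>0$; as $W$ has positive leading coefficient, it has an \emph{even} number of zeros on $(\xi_n,\infty)$. Your count (at least one zero in each of the $n-1$ gaps, plus $x_{1,n}<\xi_1$) leaves exactly one zero unplaced, so it cannot sit above $\xi_n$ and must lie in $(-1,\xi_1)$. This parity step also makes the separate, unjustified assertion $\delta<\xi_n$ unnecessary. Alternatively you can simply quote $x_{n,n}<y_{n,n}<z_{n,n}$ from~\eqref{2.1} and~\eqref{4.5}, which is precisely what the paper does to locate the last zero.
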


\subsection[Zeros of $P_{n}^{(\alpha,\beta)}$ and $P_{n-1}^{(\alpha,\beta+2)}$, $n\in\mathbb{N}$]{Zeros of $\boldsymbol{P_{n}^{(\alpha,\beta)}}$ and $\boldsymbol{P_{n-1}^{(\alpha,\beta+2)}}$, $\boldsymbol{n\in\mathbb{N}}$}

\begin{theorem}\label{Th:la} Let $\alpha > -1$, $-2 < \beta < -1$ and $\big\{P_n^{(\alpha,\beta)}\big\}_{n=0}^\infty$ be the sequence of Jacobi polyno\-mials. Let $x_{1,n}<x_{2,n}<\dots<x_{n,n}$ denote the zeros of $P_{n}^{(\alpha,\beta)}$ and $z_{1,n-1}<z_{2,n-1}<\dots<z_{n-1,n-1}$ denote the zeros of $P_{n-1}^{(\alpha,\beta+2)}$.
Then $P_{n-1}^{(\alpha,\beta+2)}$ and $P_n^{(\alpha,\beta)}$ are co-prime and the zeros of \mbox{$(1+x)P_{n-1}^{(\alpha, \beta+2)}$} interlace with the zeros of $P_n^{(\alpha,\beta)}$, i.e.,
\begin{gather*}
x_{1,n}<-1<x_{2,n}<z_{1,n-1}<x_{3,n}<\dots<x_{n-1,n}<z_{n-2,n-1}<x_{n,n}<z_{n-1,n-1}.
\end{gather*}
\end{theorem}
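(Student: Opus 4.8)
The plan is to reduce the whole statement to a single mixed three-term relation connecting the three families involved. First I would establish, from two of the classical contiguous relations for Jacobi polynomials (the one lowering $\beta$ by one at fixed degree, and the one introducing the factor $(1+x)$ when passing from $P_m^{(\alpha,\beta+1)}$ to $P_m^{(\alpha,\beta)}$ and $P_{m+1}^{(\alpha,\beta)}$), the identity
\[
(1+x)P_{n-1}^{(\alpha,\beta+2)}(x) = \frac{2n}{n+\alpha+\beta+1}\,P_{n}^{(\alpha,\beta)}(x) + \frac{2(\beta+1)}{n+\alpha+\beta+1}\,P_{n-1}^{(\alpha,\beta+1)}(x),
\]
valid for $n\ge 2$, where $n+\alpha+\beta+1=(n-1)+(\alpha+\beta+2)>0$ in our range. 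The decisive feature is the sign of the second coefficient: since $-2<\beta<-1$ we have $\beta+1<0$, so the coefficient of $P_{n-1}^{(\alpha,\beta+1)}$ is strictly negative while that of $P_n^{(\alpha,\beta)}$ is positive.

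Next I would read off the behaviour of $P_{n-1}^{(\alpha,\beta+2)}$ at the zeros of $P_n^{(\alpha,\beta)}$. Evaluating the identity at a zero $x_{k,n}$ of $P_n^{(\alpha,\beta)}$ annihilates the first term, leaving
\[
(1+x_{k,n})P_{n-1}^{(\alpha,\beta+2)}(x_{k,n}) = \frac{2(\beta+1)}{n+\alpha+\beta+1}\,P_{n-1}^{(\alpha,\beta+1)}(x_{k,n}).
\]
Lemma~\ref{bdr}, in the form \eqref{2.2} with $n$ replaced by $n-1$, gives the interlacing $-1<y_{1,n-1}<x_{2,n}<y_{2,n-1}<\dots<y_{n-1,n-1}<x_{n,n}$ of the zeros $y_{i,n-1}$ of the orthogonal polynomial $P_{n-1}^{(\alpha,\beta+1)}$ with the zeros of $P_n^{(\alpha,\beta)}$. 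In particular $P_{n-1}^{(\alpha,\beta+1)}(x_{k,n})\ne 0$ for every $k$, and (using that $P_{n-1}^{(\alpha,\beta+1)}$ has positive leading coefficient, with $x_{n,n}$ lying to the right of its largest zero and $x_{1,n}<-1$ to the left of all of them) the sign of $P_{n-1}^{(\alpha,\beta+1)}(x_{k,n})$ is $(-1)^{n-k}$ for $k=2,\dots,n$. Since $1+x_{k,n}>0$ for $k\ge 2$, the displayed relation forces $P_{n-1}^{(\alpha,\beta+2)}(x_{k,n})\ne 0$ for all $k$, which already proves co-primality of $P_{n-1}^{(\alpha,\beta+2)}$ and $P_n^{(\alpha,\beta)}$, and shows that the sign of $P_{n-1}^{(\alpha,\beta+2)}(x_{k,n})$ is $(-1)^{n-k+1}$ for $k=2,\dots,n$, hence alternates along $x_{2,n}<x_{3,n}<\dots<x_{n,n}$.

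Finally I would assemble the chain. The alternation across the $n-1$ points $x_{2,n},\dots,x_{n,n}$ produces at least one zero of $P_{n-1}^{(\alpha,\beta+2)}$ in each interval $(x_{k,n},x_{k+1,n})$, $k=2,\dots,n-1$, accounting for $n-2$ of its $n-1$ zeros. For the last zero, the case $k=n$ gives $P_{n-1}^{(\alpha,\beta+2)}(x_{n,n})<0$ (negative coefficient times $P_{n-1}^{(\alpha,\beta+1)}(x_{n,n})>0$); since $P_{n-1}^{(\alpha,\beta+2)}$ has positive leading coefficient and is therefore positive to the right of its largest zero, the intermediate value theorem yields a further zero exceeding $x_{n,n}$, which must be $z_{n-1,n-1}$. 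As $P_{n-1}^{(\alpha,\beta+2)}$ has exactly $n-1$ zeros, these are all of them; together with $x_{1,n}<-1<x_{2,n}$ from Lemma~\ref{bdr} and the extra factor $(1+x)$ contributing the zero at $-1$, this gives the stated interlacing. The one genuinely nontrivial step is establishing the contiguous relation with exactly these coefficients; everything afterwards is sign-chasing driven by the negativity of $\beta+1$.
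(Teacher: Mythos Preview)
Your proof is correct and follows essentially the same route as the paper: both hinge on the mixed relation $(1+x)(\alpha+\beta+n+1)P_{n-1}^{(\alpha,\beta+2)}=2nP_n^{(\alpha,\beta)}+2(\beta+1)P_{n-1}^{(\alpha,\beta+1)}$ (equation~\eqref{fo} in the paper), evaluate it at the zeros $x_{k,n}$ of $P_n^{(\alpha,\beta)}$, and invoke the interlacing~\eqref{2.2} of Lemma~\ref{bdr} to control the sign of $P_{n-1}^{(\alpha,\beta+1)}$ there. The only difference is cosmetic: the paper multiplies the evaluations at consecutive zeros and checks the sign of the product, whereas you track the sign $(-1)^{n-k}$ at each $x_{k,n}$ individually and use the explicit negativity at $x_{n,n}$ together with the positive leading coefficient to locate the last zero beyond $x_{n,n}$; the paper instead argues that the remaining zero cannot lie in $(-1,x_{2,n})$ because an even number of sign changes occur there.
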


\section[Zeros of $P_n^{(\alpha,\beta)}$ and $P_{n-2}^{(\alpha,\beta+t)}$, $t \geq 1$, $n\in\mathbb{N}$]{Zeros of $\boldsymbol{P_n^{(\alpha,\beta)}}$ and $\boldsymbol{P_{n-2}^{(\alpha,\beta+t)}}$, $\boldsymbol{t \geq 1}$, $\boldsymbol{n\in\mathbb{N}}$}\label{sectionst}

For f\/ixed $\alpha>-1$, $-2 < \beta< -1$, and f\/ixed $t \geq 1$, the parameter $\beta+t$ is
greater than $-1$ and each sequence of Jacobi polynomials $\big\{P_{n}^{(\alpha,\beta+t)}\big\} _{n=0}^\infty$ is orthogonal on the interval $(-1,1)$. It is known (see \eqref{2.1} and \eqref{2.2}) that the zeros of the quasi-orthogonal polynomial $P_n^{(\alpha,\beta)}$ interlace with the zeros of
the (orthogonal) polynomial $P_{n-1}^{(\alpha,\beta+1)}$, as well as with the zeros of the (orthogonal) polynomial
$P_n^{(\alpha,\beta+1)}$. Here, we discuss interlacing between the zeros of~$P_n^{(\alpha,\beta)}$
and the zeros of the (orthogonal) polynomial $P_{n-2}^{(\alpha, \beta+1)}$. We also prove that
the zeros of~$P_n^{(\alpha,\beta)}$ and the zeros of $P_{n-2}^{(\alpha,\beta+t)}$ interlace for
continuous variation of $t$, $2 \leq t \leq 4$ and that the polyno\-mials~$P_n^{(\alpha,\beta)}$ and~$ P_{n-2}^{(\alpha,\beta+t)}$ are co-prime for any $t\in[2,4]$

\begin{theorem}\label{Th:St1} Let $n \in{\mathbb N}$, $n\geq3$, $\alpha$, $\beta$ fixed, $\alpha >-1$, $-2 < \beta < -1$, and suppose $\big\{P_n^{(\alpha,\beta)}\big\}_{n=0}^\infty$ is the sequence of Jacobi polynomials.
\begin{itemize}\itemsep=0pt
\item[$(i)$] The $n-2$ distinct zeros of $P_{n-2}^{(\alpha,\beta+1)}$ $($which all lie in the
 interval $(-1,1))$ together with the point $\frac{2(n+\beta)(\alpha+\beta+n)}{(\alpha+\beta+2n)(\alpha+\beta+2n-1)}-1$, interlace
with the $n-1$ distinct zeros of $P_n^{(\alpha,\beta)}$ that lie in~$(-1,1)$, provided $P_{n-2}^{(\alpha,\beta+1)}$ and $P_n^{(\alpha,\beta)}$ are co-prime.
\item[$(ii)$] For $2 \le t \le 4$, the $n-2$ distinct zeros of $P_{n-2}^{(\alpha,\beta+t)}$
 interlace with the $n-1$ zeros of $P_n^{(\alpha,\beta)}$ that lie in $(-1,1)$.
\end{itemize}
\end{theorem}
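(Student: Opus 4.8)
The plan is to handle the two parts separately, with part (i) resting on an explicit mixed three-term recurrence and part (ii) on a continuity-in-$t$ argument. For part (i), I would first express $P_n^{(\alpha,\beta)}$ through the orthogonal family $\{P_m^{(\alpha,\beta+1)}\}_{m=0}^\infty$. The starting point is the standard contiguous relation $(2n+\alpha+\beta+1)P_n^{(\alpha,\beta)}(x)=(n+\alpha+\beta+1)P_n^{(\alpha,\beta+1)}(x)+(n+\alpha)P_{n-1}^{(\alpha,\beta+1)}(x)$, into which I substitute the three-term recurrence \eqref{1} for $\{P_m^{(\alpha,\beta+1)}\}$ (i.e.\ \eqref{d} with $\beta$ replaced by $\beta+1$) so as to eliminate $P_n^{(\alpha,\beta+1)}$. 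This yields an identity of the form $P_n^{(\alpha,\beta)}(x)=A_n(x-\delta_n)P_{n-1}^{(\alpha,\beta+1)}(x)+C_nP_{n-2}^{(\alpha,\beta+1)}(x)$ with $A_n>0$, $C_n<0$, and $\delta_n$ equal to the stated point $\frac{2(n+\beta)(\alpha+\beta+n)}{(\alpha+\beta+2n)(\alpha+\beta+2n-1)}-1$; a short computation with \eqref{d} confirms this identification of $\delta_n$ and also shows $-1<\delta_n<1$.

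With this identity I would run a sign-change argument. Writing $\zeta_1<\dots<\zeta_{n-2}$ for the zeros of the orthogonal polynomial $P_{n-2}^{(\alpha,\beta+1)}$, evaluation at $\zeta_k$ gives $P_n^{(\alpha,\beta)}(\zeta_k)=A_n(\zeta_k-\delta_n)P_{n-1}^{(\alpha,\beta+1)}(\zeta_k)$, while at $\delta_n$ the linear factor kills the first term, leaving $P_n^{(\alpha,\beta)}(\delta_n)=C_nP_{n-2}^{(\alpha,\beta+1)}(\delta_n)$. Since $P_{n-1}^{(\alpha,\beta+1)}$ and $P_{n-2}^{(\alpha,\beta+1)}$ are consecutive orthogonal polynomials, $P_{n-1}^{(\alpha,\beta+1)}(\zeta_k)$ alternates in sign with $k$; across any gap $(\zeta_k,\zeta_{k+1})$ not containing $\delta_n$ the factor $(x-\delta_n)$ keeps its sign, forcing a sign change of $P_n^{(\alpha,\beta)}$ and hence a zero, and across the single gap that does contain $\delta_n$ the sign flip of $(x-\delta_n)$ is exactly compensated, with the value $C_nP_{n-2}^{(\alpha,\beta+1)}(\delta_n)$ supplying the intermediate opposite sign needed to maintain alternation over the ordered set $\{\delta_n,\zeta_1,\dots,\zeta_{n-2}\}$. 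Co-primality of $P_n^{(\alpha,\beta)}$ and $P_{n-2}^{(\alpha,\beta+1)}$ guarantees that none of these points is itself a zero of $P_n^{(\alpha,\beta)}$, so all inequalities are strict. A final check of the sign of $P_n^{(\alpha,\beta)}$ near $x=-1^+$ and $x=1^-$ (using $P_n^{(\alpha,\beta)}(1)>0$ and the known sign of $P_n^{(\alpha,\beta)}(-1)$) fixes the boundary gaps and shows that the $n-1$ sign changes so produced account for exactly the $n-1$ zeros $x_{2,n}<\dots<x_{n,n}$ of $P_n^{(\alpha,\beta)}$ lying in $(-1,1)$, which is the asserted interlacing.

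For part (ii) I would argue by continuous variation of $t$ on $[2,4]$, where $\beta+t>-1$, so each $P_{n-2}^{(\alpha,\beta+t)}$ is genuinely orthogonal with all $n-2$ zeros simple and in $(-1,1)$, varying continuously with $t$. First I would fix the base value $t=2$ and establish interlacing there by the mechanism of part (i), now using an analogous mixed recurrence expressing $P_n^{(\alpha,\beta)}$ through $\{P_m^{(\alpha,\beta+2)}\}$, obtained by iterating the contiguous relation together with \eqref{1}. Second, and crucially, I would prove that $P_n^{(\alpha,\beta)}$ and $P_{n-2}^{(\alpha,\beta+t)}$ are co-prime for every $t\in[2,4]$: a hypothetical common zero, fed back through the mixed recurrence, would be forced to be a common zero of two consecutive members of the orthogonal family $\{P_m^{(\alpha,\beta+t)}\}$, which is impossible. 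Since interlacing of two continuously varying families can be lost only when a zero of one crosses a zero of the other, that is, only at a parameter value where they share a zero, co-primality throughout $[2,4]$ forbids any crossing and the interlacing established at $t=2$ persists for all $t\in[2,4]$.

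The main obstacle I anticipate is twofold. In part (i) it is the sign bookkeeping in the gap containing $\delta_n$ and at the two boundary gaps, namely verifying that $\operatorname{sgn}\bigl(C_nP_{n-2}^{(\alpha,\beta+1)}(\delta_n)\bigr)$ is precisely the sign that completes the alternation, and that the counting closes so as to yield all $n-1$ interior zeros rather than accidentally implicating the zero $x_{1,n}<-1$. In part (ii) the delicate step is the co-primality assertion over the whole continuum $t\in[2,4]$, on which the persistence argument entirely depends; establishing it uniformly (rather than at isolated $t$) through the mixed recurrence, and confirming that the number of zeros of $P_{n-2}^{(\alpha,\beta+t)}$ in $(-1,1)$ stays constant as $t$ varies, is where the real work lies.
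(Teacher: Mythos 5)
Your part~(i) is correct but takes a genuinely different route from the paper. The identity you posit, $P_n^{(\alpha,\beta)}=A_n(x-\delta_n)P_{n-1}^{(\alpha,\beta+1)}+C_nP_{n-2}^{(\alpha,\beta+1)}$, does exist: it follows from the contiguous relation you quote combined with the three-term recurrence \eqref{1} applied to the $(\alpha,\beta+1)$ family, and a direct computation gives $A_n=\frac{2n+\alpha+\beta}{2n}>0$, $C_n=-\frac{(n+\alpha-1)(n+\beta)}{n(2n+\alpha+\beta-1)}<0$, with $\delta_n$ indeed equal to the stated point $\frac{2(n+\beta)(\alpha+\beta+n)}{(\alpha+\beta+2n)(\alpha+\beta+2n-1)}-1\in(-1,1)$. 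Your sign bookkeeping also closes: when $\delta_n\in(\zeta_j,\zeta_{j+1})$, the signs forced by $A_n>0$, $C_n<0$ and the interlacing of the consecutive orthogonal polynomials $P_{n-1}^{(\alpha,\beta+1)}$, $P_{n-2}^{(\alpha,\beta+1)}$ make $P_n^{(\alpha,\beta)}(\delta_n)$ opposite in sign to its values at the two neighbouring $\zeta$'s, so alternation over the merged set holds and the zero count places the remaining interior zero outside the convex hull of the merged points. The paper argues in the opposite direction: it uses relation \eqref{2.17}, whose third member is the \emph{quasi-orthogonal} polynomial $P_{n-1}^{(\alpha,\beta)}$ rather than the orthogonal $P_{n-1}^{(\alpha,\beta+1)}$, evaluates at successive zeros of $P_n^{(\alpha,\beta)}$, and leans on Theorem~\ref{Th:2.1}. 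Your version buys independence from the quasi-orthogonal interlacing theory at the cost of the alternation bookkeeping at $\delta_n$; the paper's version avoids that bookkeeping but needs a two-case analysis of where the special point falls. Both are sound.

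Part~(ii), however, has a genuine gap, and it sits exactly where you anticipated: co-primality of $P_n^{(\alpha,\beta)}$ and $P_{n-2}^{(\alpha,\beta+t)}$ for \emph{every real} $t\in[2,4]$. Your proposed mechanism---feeding a hypothetical common zero ``back through the mixed recurrence''---is only available when $t$ is an integer. For non-integer $t$ there is no identity with polynomial coefficients linking $P_n^{(\alpha,\beta)}$ to two or three consecutive members of $\{P_m^{(\alpha,\beta+t)}\}_{m}$: the connection of $P_n^{(\alpha,\beta)}$ to that family involves all of $P_0^{(\alpha,\beta+t)},\dots,P_n^{(\alpha,\beta+t)}$ with generically nonzero coefficients, and the truncation to finitely many terms occurs precisely at integer shifts, which is why the paper's lemma lists such relations only for the shifts $1,2,3,4$ (equations \eqref{2.17}, \eqref{fo}, \eqref{n2b2}, \eqref{n2b3}, \eqref{n2b4}). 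Without co-primality at every $t$, the continuity argument collapses, since a crossing of zeros at some non-integer $t$ cannot be excluded; and the finitely-many-bad-$t$ observation one might extract from a resultant argument does not help, because interlacing can genuinely be destroyed at a single crossing.

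The paper circumvents this by reversing the logical dependency. It proves the two endpoint cases $t=2$ and $t=4$ (via \eqref{n2b2} and \eqref{n2b4}, where the only possible common zero is an explicit point lying below $-1$, hence harmless), and then invokes the monotonicity of Jacobi zeros in the parameter (Szeg\H{o}, Theorem~6.21.1): each zero $\zeta_k(t)$ of $P_{n-2}^{(\alpha,\beta+t)}$ increases with $t$, so it is sandwiched between $\zeta_k(2)$ and $\zeta_k(4)$, which both lie in the same gap between consecutive interior zeros of $P_n^{(\alpha,\beta)}$. Interlacing for all intermediate $t$ follows at once, and co-primality on $[2,4]$ emerges as a \emph{consequence} of this argument rather than serving as a hypothesis. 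To repair your proof you would need to do the same: establish the $t=4$ case as well (your $t=2$ mechanism adapts, but a new identity playing the role of \eqref{n2b4} is required) and replace co-primality-plus-continuity by monotonicity.
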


\begin{remark}
Note that Theorem \ref{Th:St1}(ii) does not assume that $P_{n-2}^{(\alpha,\beta+t)}$ and
$P_n^{(\alpha,\beta)}$ are co-prime, $2 \le t \le 4$. This assumption is not required since the proof will show that $P_{n-2}^{(\alpha,\beta+t)}$ and $P_n^{(\alpha,\beta)}$ are co-prime for every~$t$, $2 \le t \le 4$, $\alpha >-1$, $-2 < \beta < -1$, and $n \in{\mathbb N}$.
\end{remark}

\section[Bounds for the smallest zeros of $P_n^{(\alpha,\beta)}$, $n\in\mathbb{N}$]{Bounds for the smallest zeros of $\boldsymbol{P_n^{(\alpha,\beta)}}$, $\boldsymbol{n\in\mathbb{N}}$}\label{bounds}

In this section we derive upper and lower bounds for the zero
of $P_{n}^{(\alpha,\beta)}$ that lies outside the interval $(-1,1)$ when $\alpha > -1$, $-2 < \beta< -1$.
\begin{theorem}\label{Th:bounds} Let $n \in{\mathbb N}$, $n\geq3$, $\alpha$, $\beta$ fixed, $\alpha >-1$, $-2 < \beta < -1$. Denote the smallest zero of the Jacobi polynomial $P_n^{(\alpha,\beta)}$ by $x_{1,n}$. Then
\begin{gather}\label{bd}
-1+A_n<-1+\frac{D_n}{C_n}<x_{1,n}<-B_n<-1,
\end{gather}
where \begin{subequations}\label{labels}
\begin{gather}
A_n=\frac{2(\beta+1)}{2n+\alpha+\beta},\\
B_n=1-\frac{2(\beta +1)(\beta +2)}{(n+\beta+1) (n+\alpha +\beta+1)},\\
C_n=(\beta +3) (\alpha +\beta +2)+2 (n-1) (n+\alpha +\beta+2),\\
D_n=2(\beta+1)(\beta+3).
\end{gather}\end{subequations}
\end{theorem}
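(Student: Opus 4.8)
The plan is to convert the four inequalities in~\eqref{bd} into sign statements for $P_n^{(\alpha,\beta)}$ and to make those signs computable through the classical expansion of a Jacobi polynomial about the point $x=-1$. Setting $u=\frac{1+x}{2}$, one has
\begin{gather*}
P_n^{(\alpha,\beta)}(x)=\frac{(-1)^n(\beta+1)_n}{n!}\,G(u),\qquad G(u)=\sum_{k=0}^n \frac{(-n)_k(n+\alpha+\beta+1)_k}{(\beta+1)_k\,k!}\,u^k,
\end{gather*}
and I write $g_k$ for the $k$-th coefficient of $G$. The decisive structural fact for $\alpha>-1$, $-2<\beta<-1$ is that $g_0=1$ while $\operatorname{sign}(g_k)=(-1)^{k+1}$ for $1\le k\le n$, because $(n+\alpha+\beta+1)_k>0$ whereas $(\beta+1)_k<0$ (its only negative factor is $\beta+1$). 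Consequently every non-constant monomial $g_ku^k$ is strictly negative when $u<0$, so $G$ is strictly increasing on $(-\infty,0)$, $G(0)=1$, and $G(u)\to-\infty$ as $u\to-\infty$. Hence $G$ has a unique negative zero $u^\ast=\frac{1+x_{1,n}}{2}$ (in accordance with Theorem~\ref{Th:2.1}), and for any $c<0$ the equivalence
\begin{gather*}
u^\ast>c\ \Longleftrightarrow\ G(c)<0,\qquad u^\ast<c\ \Longleftrightarrow\ G(c)>0
\end{gather*}
holds. Thus each bound reduces to determining the sign of $G$ at the corresponding value of $u$.

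For the two lower bounds I would exploit that the partial sums dominate $G$ from above: since the discarded tail consists only of negative terms, $G(u)\le T_m(u):=\sum_{k=0}^m g_ku^k$ for every $0\le m<n$ and every $u<0$. Evaluating at $c=\frac{A_n}{2}$, the $u$-coordinate of $-1+A_n$, a short calculation collapses the linear truncation to $T_1\!\left(\frac{A_n}{2}\right)=-\frac{(n-1)(n+\alpha+\beta)}{2n+\alpha+\beta}<0$, so $G\!\left(\frac{A_n}{2}\right)<0$ and therefore $x_{1,n}>-1+A_n$. For the sharper bound I would take $c=\frac{D_n}{2C_n}$, the $u$-coordinate of $-1+\frac{D_n}{C_n}$, and verify the single explicit rational inequality $T_2\!\left(\frac{D_n}{2C_n}\right)\le 0$; together with the negative tail this yields $G\!\left(\frac{D_n}{2C_n}\right)<0$ and hence $x_{1,n}>-1+\frac{D_n}{C_n}$. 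The remaining left-hand comparison $A_n<\frac{D_n}{C_n}$, and the trivial $-B_n<-1$ (equivalent to $(\beta+1)(\beta+2)<0$), are routine algebraic checks.

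The upper bound $x_{1,n}<-B_n$ is where the real difficulty lies, and here truncation is useless: it bounds $G$ only from above, whereas proving $u^\ast<c_1$ with
\begin{gather*}
c_1=\frac{1-B_n}{2}=\frac{(\beta+1)(\beta+2)}{(n+\beta+1)(n+\alpha+\beta+1)}
\end{gather*}
requires the lower estimate $G(c_1)>0$. Writing $G(c_1)=1-\sum_{k\ge1}t_k$ with $t_k=|g_k|\,|c_1|^k>0$, the statement to prove is exactly $\sum_{k\ge1}t_k<1$. The value $c_1$ is engineered so that the leading term is explicit, $t_1=\frac{n(\beta+2)}{n+\beta+1}$, which gives the gap $1-t_1=-\frac{(\beta+1)(n-1)}{n+\beta+1}>0$, so it remains to show $\sum_{k\ge2}t_k<1-t_1$. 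I expect this to be the main obstacle. The bound becomes tight as $\beta\to-1^-$, where both the gap $1-t_1$ and the tail are of the same order $|\beta+1|$ (the factor $c_1^2=O((\beta+1)^2)$ in $t_2$ is offset by $|g_2|=O(|\beta+1|^{-1})$), so no lossy estimate can succeed; one must track the leading constants, the relevant one being $\frac{n+\alpha+1}{2(n+\alpha)}<1$. The natural route is to bound the consecutive ratios
\begin{gather*}
\frac{t_{k+1}}{t_k}=\frac{(n-k)(n+\alpha+\beta+1+k)}{(k+1)(\beta+1+k)}\,|c_1|
\end{gather*}
by a single $\theta<1$, sum the resulting geometric tail, and check $\sum_{k\ge2}t_k<1-t_1$ uniformly for $n\ge3$, $\alpha>-1$, $-2<\beta<-1$. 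Combining the four inequalities then establishes~\eqref{bd}.
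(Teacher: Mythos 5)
Your reduction is genuinely different from the paper's proof. The paper never expands anything in series: it evaluates the mixed recurrences \eqref{n2b2}, \eqref{n2b3}, \eqref{n2b4} at $x_{1,n}$ and combines them with the sign relation \eqref{sign} (itself a consequence of the ordering \eqref{order}, i.e.\ of Theorem~\ref{Th:2.1} and Theorem~\ref{Th:St1}(ii), plus the behaviour of the polynomials at $-\infty$), so each nontrivial bound in \eqref{bd} falls out as the sign of a single linear factor. Your framework is correct as far as it goes: the ${}_2F_1$ expansion about $x=-1$, the sign pattern $g_0=1$, $\mathrm{sign}(g_k)=(-1)^{k+1}$, the strict monotonicity of $G$ on $(-\infty,0)$ with unique negative zero $u^\ast$, the equivalences $u^\ast>c\Leftrightarrow G(c)<0$, $u^\ast<c\Leftrightarrow G(c)>0$, the truncation bound $G\le T_m$ for $u<0$, and the evaluation $T_1(A_n/2)=-\frac{(n-1)(n+\alpha+\beta)}{2n+\alpha+\beta}<0$ all check out, so $x_{1,n}>-1+A_n$ and the two routine comparisons are fine. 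What your route buys is self-containedness (no recurrence identities, no interlacing theorems); the price is that the two sharp bounds become delicate quantitative series estimates instead of sign checks.

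That price is exactly where the proposal stops being a proof. First, $T_2\bigl(\tfrac{D_n}{2C_n}\bigr)\le0$ is asserted, not verified, and it cannot be waved through: numerically it is razor-thin (for $n=15$, $\alpha=-0.93$, $\beta=-1.05$ its value is about $-1.7\times10^{-5}$). It is true, and the clean way to see it is the identity (with $N=(n-1)(n+\alpha+\beta+2)$, so that $C_n=(\beta+3)(\alpha+\beta+2)+2N$)
\begin{gather*}
2(\beta+2)C_n^2\,T_2\bigl(\tfrac{D_n}{2C_n}\bigr)=(\beta+1)^2N\bigl[(\beta+1)N-(\alpha+\beta+2)(\beta+3)\bigr],
\end{gather*}
whose bracket is negative: immediate if $\alpha+\beta+2\ge0$, while if $\alpha+\beta+2<0$ then $|\alpha+\beta+2|<|\beta+1|$ (because $\alpha>-1$) and $N>4>\beta+3$ give $(\beta+1)N-(\alpha+\beta+2)(\beta+3)<|\beta+1|\bigl((\beta+3)-N\bigr)<0$. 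Second, and more seriously, the upper bound $x_{1,n}<-B_n$ --- the inequality on which the theorem chiefly turns, and which you yourself call the main obstacle --- is only a plan. It can be closed along your lines, but only with the right constants, which you must supply: the ratios $t_{k+1}/t_k$ decrease in $k$, hence $\theta:=\sup_{k\ge2}t_{k+1}/t_k$ is attained at $k=2$, and the elementary bounds $n-2<n+\beta+1$, $n+\alpha+\beta+3<3(n+\alpha+\beta+1)$, $|\beta+1|(\beta+2)\le\frac14$, $\beta+3>1$ yield $\theta<\frac14$; separately one needs the strict inequality $\frac{t_2}{1-t_1}=\frac{n(n+\alpha+\beta+2)(\beta+2)}{2(n+\beta+1)(n+\alpha+\beta+1)}<\frac34$, which holds on the whole range but is tight as $(n,\alpha,\beta)\to(3,-1,-1)$; it follows because $3(n+\beta+1)(n+\alpha+\beta+1)-2n(\beta+2)(n+\alpha+\beta+2)$ is increasing in $\alpha$ and positive at $\alpha=-1$. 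Then $\sum_{k\ge2}t_k\le t_2/(1-\theta)<\frac43\cdot\frac34(1-t_1)=1-t_1$, as required. With these two computations supplied your argument becomes a complete, attractive alternative proof; as submitted, it establishes only the first lower bound in \eqref{bd}.
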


The upper and lower bounds obtained in Theorem \ref{Th:bounds} for the zero of $P_n^{(\alpha,\beta)}$, $\alpha>-1$, $-2<\beta<-1$, that is smaller than $-1$, approach $-1$ as $n\to\infty$. This is consistent with the observation that this zero increases with~$n$ (cf.\ Remark~\ref{rem}). These bounds for the smallest zero of a~quasi-orthogonal (order~1) Jacobi polynomial are remarkably good. We provide some numerical examples in Table~\ref{Jac} to illustrate the inequalities in~\eqref{bd}.

\begin{table}[!ht] \centering \caption{Bounds for the smallest zero of $P_{15}^{(\alpha,\beta)}(x)$ for dif\/ferent values of $\alpha>-1$ and $-2<\beta<-1$.}\label{Jac}
\vspace{1mm}
\begin{tabular}{|c|c|c|c|}
\hline
$\alpha$, $\beta$ & $-1+\frac{D_n}{C_n}$ & $x_{1,15}$ & $-B_n$\tsep{3pt}\bsep{3pt}\\
\hline
$\alpha=0.93$, $\beta=-1.9$ & $-1.0044$ & $-1.00287$ & $-1.00085$\\
\hline
$\alpha=-0.93$, $\beta=-1.9$ & $-1.005$& $-1.00327$ & $-1.00097$\\
\hline
$\alpha=-0.93$, $\beta=-1.05$& $-1.0004636$ & $-1.0004635$ & $-1.0045$ \\
\hline
$ \alpha=0.93$, $\beta=-1.05$& $-1.0004094$ & $-1.0004088$ & $-1.0004001$\\
\hline
$\alpha=8.3$, $\beta=-1.55$& $-1.00235$ &$-1.00231$ & $-1.00151$\\
\hline
\end{tabular}
\end{table}

\section{Proof of main results}
We will make use of the following mixed three term recurrence relations satisf\/ied by Jacobi polynomials.
The relations are derived from contiguous relations satisf\/ied by~$_2F_1$ hypergeometric functions and can easily be verif\/ied by comparing coef\/f\/icients of powers of~$x$ on both sides of the equations.

\begin{lemma}Let $\big\{P_n^{(\alpha,\beta)}\big\}_{n=0}^\infty$, $n\in\mathbb{N}$ be the sequence of Jacobi polynomials:
\begin{gather}
2 n (\alpha +\beta +n) P_n^{(\alpha ,\beta )}= -(1+x) (\alpha +n-1) (\alpha +\beta +2 n) P_{n-2}^{(\alpha ,\beta +1)} \nonumber\\
 \qquad{}-[2 (\beta +n) (\alpha +\beta +n)- (x+1) (\alpha +\beta +2 n-1) (\alpha +\beta +2 n)] P_{n-1}^{(\alpha ,\beta )},
\label{2.17}\\
(x+1) (\alpha +\beta +n+1) P_{n-1}^{(\alpha ,\beta +2)}= 2 n P_n^{(\alpha ,\beta )} +2 (\beta +1) P_{n-1}^{(\alpha ,\beta +1)},\label{fo}\\
\label{n2b2}
\frac{(\beta +n)}{2n} \left(x+1-A_n\right) P_{n-1}^{(\alpha ,\beta )}=
\frac{(x+1)^2 (\alpha +n-1)}{4n} P_{n-2}^{(\alpha ,\beta +2)}+\frac{(\beta +1)}{\alpha +\beta +2 n} P_{n}^{(\alpha ,\beta )},\\
\label{n2b3}
 \left(x+B_n\right) P_{n-1}^{(\alpha ,\beta )}-A(x)P_n^{(\alpha ,\beta )}+ \frac{(x+1)^3 (\alpha +n-1) (\alpha +\beta +2 n)}{4 (\beta +n) (\beta +n+1)}P_{n-2}^{(\alpha ,\beta +3)},
\\
\label{n2b4}
 \left(C_n(x+1)-D_n\right)P_{n-1}^{(\alpha,\beta)}
=\frac{(x+1)^4 E_{n}}{8(n+\beta)(\beta+2)}P_{n-2}^{(\alpha,\beta+4)}-\frac{n B(x)}{2(n+\beta)(\beta+2)}P_{n}^{(\alpha,\beta)},
\end{gather}
where $A_n$, $B_n$, $C_n$ and $D_n$ are given in~\eqref{labels},
\begin{gather*}
E_n=(2n+\alpha+\beta)(n+\alpha-1)(n+\alpha+\beta+1)(n+\alpha+\beta+2)
\end{gather*} and
\begin{gather*}A(x)= \frac{n (2 (\beta +1) (\beta +2)-(n-1) (x+1) (\alpha +n-1))}{(\beta +n) (\beta +n+1) (\alpha +\beta +n+1)},\\
B(x)= \big(\alpha ^2+5 \alpha \beta +7 \alpha +4 \beta ^3+24 \beta ^2+39 \beta -2 n^3-3 \alpha n^2-5 \beta n^2-4 n^2-\alpha ^2 n-5 \alpha \beta n \\
\hphantom{B(x)=}{}
-4 \alpha n+10 \beta n+14 n+16\big)-2 (n-1)(n+\alpha-1) (2 n+\alpha +3 \beta+4)x\\
\hphantom{B(x)=}{} -(n-1)(n+\alpha-1)(2n+\alpha +\beta)x^2.
\end{gather*}
\end{lemma}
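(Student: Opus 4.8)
The plan is to prove all five identities by reducing each to a finite combination of Gauss's contiguous relations for ${}_2F_1$, using the representation
\[P_m^{(\gamma,\delta)}(x)=\frac{(\gamma+1)_m}{m!}\,{}_2F_1\!\left(-m,\,m+\gamma+\delta+1;\,\gamma+1;\,\frac{1-x}{2}\right).\]
Writing $z=(1-x)/2$, so $1+x=2(1-z)$, every Jacobi polynomial occurring in \eqref{2.17}--\eqref{n2b4} has first hypergeometric parameter $-m$, \emph{fixed} third parameter $\alpha+1$, and second parameter $m+\gamma+\delta+1$. Within any one identity the three polynomials therefore differ only by integer shifts of the first and second parameters, together with powers of the factor $(1-z)$; for instance $P_{n}^{(\alpha,\beta)}$ and $P_{n-2}^{(\alpha,\beta+2)}$ share the value $n+\alpha+\beta+1$ of the second parameter and differ only in the first, $-n$ versus $-n+2$. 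Each identity is thus, in principle, a finite consequence of the fifteen classical contiguous relations, which is the precise sense in which the statement says they are derived from contiguous relations.

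In practice I would organise the verification around two anchor relations and an iteration. First I would establish \eqref{2.17} and \eqref{fo} directly, each being a short composition of the $n$-recurrence \eqref{1} with the elementary $(1+x)$-weighted single-step $\beta$-raising relation
\[(1+x)P_{m}^{(\alpha,\beta+1)}(x)=\frac{2}{2m+\alpha+\beta+2}\Big[(m+\beta+1)P_{m}^{(\alpha,\beta)}(x)+(m+1)P_{m+1}^{(\alpha,\beta)}(x)\Big],\]
a standard relation one checks instantly at $x=\pm1$. I would then obtain \eqref{n2b2}, \eqref{n2b3}, \eqref{n2b4} as successive rungs of a ladder: applying the displayed relation with $m=n-2$ and $\beta$ replaced by $\beta+1,\beta+2,\beta+3$ in turn raises $\beta$ by one in the degree-$(n-2)$ polynomial at the cost of one extra factor $(1+x)$, and the lower-$\beta$ polynomials that appear are eliminated using the previous rung together with \eqref{2.17}, \eqref{fo} and \eqref{1} until only $P_{n-1}^{(\alpha,\beta)}$, $P_n^{(\alpha,\beta)}$ and the single high-$\beta$ term survive. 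The emergence of the powers $(1+x)^2,(1+x)^3,(1+x)^4$ and of the data $A_n,B_n,C_n,D_n$ of \eqref{labels} together with $E_n$, $A(x)$, $B(x)$ is exactly the bookkeeping of this elimination; note that \eqref{n2b3} asserts the vanishing of the displayed three-term combination.

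A self-contained alternative --- the route the statement itself advertises --- is \emph{direct coefficient comparison}. After the prefactors $(1+x)^k$ are expanded, each side of every identity is a polynomial in $x$ of degree $n$, so it suffices to match $n+1$ coefficients. Using the closed form
\[P_m^{(\gamma,\delta)}(x)=\sum_{k}\binom{m+\gamma}{m-k}\binom{m+\delta}{k}\Big(\tfrac{x-1}{2}\Big)^{k}\Big(\tfrac{x+1}{2}\Big)^{m-k},\]
the coefficient of each power of $1+x$ on both sides is an explicit product of Pochhammer symbols, and the required equality is, for every fixed power and all $n,\alpha,\beta$, an elementary Pochhammer identity cleared by cancellation or by a Chu--Vandermonde summation. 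I would begin with the leading coefficient and with the value at $x=-1$, where every $(1+x)^k$ vanishes and $P_m^{(\gamma,\delta)}(-1)=(-1)^m(\delta+1)_m/m!$; these two evaluations already fix the normalising constants.

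The main obstacle is not conceptual but the control of the coefficients on the higher rungs: the quadratic and cubic data packaged into $A(x)$ and $B(x)$, and the constant $E_n$, must reproduce exactly after repeated elimination, and this is where a stray sign or a misplaced Pochhammer factor is most likely to enter. I would mitigate this by cross-checking each rung against its predecessor at the two cheap specialisations $x=-1$ and the leading power of $x$, and by treating the degenerate case $n=2$ separately, where $P_{n-2}^{(\alpha,\beta+t)}\equiv1$ isolates the polynomial factors and confirms the definitions in \eqref{labels}.
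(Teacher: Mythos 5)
Your proposal is correct and takes essentially the same route as the paper, whose entire proof of this lemma is the one-line assertion that the relations ``are derived from contiguous relations satisf\/ied by $_2F_1$ hypergeometric functions and can easily be verif\/ied by comparing coef\/f\/icients of powers of~$x$'' --- exactly your two routes, with your ladder built on the Rainville relation $\tfrac12(2m+\alpha+\beta+2)(x+1)P_m^{(\alpha,\beta+1)}=(m+1)P_{m+1}^{(\alpha,\beta)}+(m+\beta+1)P_m^{(\alpha,\beta)}$ being the derivation the paper alludes to, and you also correctly spot that \eqref{n2b3} is to be read as the vanishing of the displayed combination. One trivial bookkeeping correction: in \eqref{n2b3} and \eqref{n2b4} the individual terms have apparent degrees $n+1$ and $n+2$ (the top coef\/f\/icients cancel between the two sides), so the coef\/f\/icient comparison runs over up to $n+3$ powers of $x$, not $n+1$; this does not af\/fect the argument.
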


\begin{proof}[Proof of Theorem \ref{Th:2.1}] Evaluating the mixed three-term recurrence relation \cite[p.~265]{Rai}
\begin{gather*}
\frac12(2+\alpha +\beta +2n) (x+1) P_{n}^{(\alpha,\beta +1)}(x) = (n +1) P_{n +1}^{(\alpha,\beta)}(x) + (1+\beta+n)
P_{n}^{(\alpha,\beta)}(x)
\end{gather*}
at successive zeros $x_{i,n}$, $x_{i+1,n}$
of $P_{n}^{(\alpha,\beta)}$, $i\in\{1,\dots,n-1\}$, we obtain
\begin{gather}4{(n+1)}^2 P_{n +1}^{(\alpha,\beta)} (x_{i,n}) P_{n +1}^{(\alpha,\beta)}(x_{i+1,n})\nonumber\\
\qquad{} = {(2+\alpha +\beta +2n)}^2(x_{i,n}+1)(x_{i+1,n}+1) P_{n}^{(\alpha,\beta +1)}(x_{i,n}) P_{n}^{(\alpha,\beta +1)}(x_{i+1,n}). \label{2.5}
\end{gather}
Now, from \eqref{2.1}, $(1+x_{i,n})( 1+x_{i+1,n})<0$ when $i=1$ and $(1+x_{i,n})(
1+x_{i+1,n})>0$ for $i\in\{2,\dots,n-1\}$ while $P_{n}^{(\alpha,\beta +1)}(x_{i,n})
P_{n}^{(\alpha,\beta +1)}(x_{i+1,n}) <0$ for $i\in\{1,2,\dots,n-1\}$. We deduce from~\eqref{2.5}
that $P_{n +1}^{(\alpha,\beta)}(x_{i,n})$ and $P_{n +1}^{(\alpha,\beta)}(x_{i+1,n})$ have the same
sign for $i=1$ and dif\/fer in sign for $i=2,\dots,n-1$. Since the zeros are distinct, it follows that $P_{n +1}^{(\alpha,\beta)}$
has an even number of zeros in the interval $(x_{1,n},x_{2,n})$ and an odd number of zeros in each of the intervals $(x_{i,n},x_{i+1,n})$, $i\in\{2,\dots,n-1\}$. Therefore $P_{n
+1}^{(\alpha,\beta)}$ has at least $n-2$ simple zeros between $x_{2,n}$ and $x_{n,n}$, plus
its smallest zero $x_{1, n+1}$ which is $<-1$ and, from~\eqref{2.1} and~\eqref{2.2}, its largest zero $x_{n+1,
n+1} > y_{n,n} > x_{n,n}$. Therefore, $n$ zeros of $P_{n+1}^{(\alpha,\beta)}$ are accounted for
and we must still have either no zeros or two zeros of $P_{n +1}^{(\alpha,\beta)}$ in the interval
$(x_{1,n},x_{2,n})$ where $x_{1,n} < -1 < x_{2,n}$ for each $n \in{\mathbb N}$, $n \geq 1$. Since
exactly one of the zeros of $P_{n +1}^{(\alpha,\beta)}$ is $<-1$ for $n\in{\mathbb N}$, $n\geq
1$, the only possibility is $x_{1,n}< x_{1,n+1}< -1 < x_{2,n+1}< x_{2,n}$ which proves the
result.
\end{proof}

\begin{proof}[Proof of Theorem \ref{Th:3.1}]
It follows from \eqref{1} and the assumption that $P_{n}^{(\alpha,\beta)}$
and $P_{n-2}^{(\alpha,\beta)}$ are co-prime that $P_{n}^{(\alpha,\beta)}(d_n)\neq 0$ since $c_n$, $e_n>0$ provided that $n\geq3$. Evaluating~\eqref{1} at the $n-2$ pairs
of successive zeros $x_{i,n}$ and $x_{i+1,n}$, $i\in\{2,\dots,n-1\}$, of $P_{n}^{(\alpha,\beta)}$ that lie in the interval $(-1,1)$, we obtain
\begin{gather}
\frac{P_{n-1}^{(\alpha,\beta)}(x_{i,n})P_{n-1}^{(\alpha,\beta)}(x_{i+1,n})}{P_{n-2}^{(\alpha,\beta)}(x_{i,n})
P_{n-2}^{(\alpha,\beta)}(x_{i+1,n})}= \frac{(e_n)^2}{(d_n-x_{i,n})(d_n-x_{i+1,n})}.\label{4.1}
\end{gather}
The right-hand side of~\eqref{4.1} is positive if and only if $d_n\notin(x_{i,n},x_{i+1,n})$, while
\begin{gather*}
P_{n-1}^{(\alpha,\beta)}(x_{i,n})P_{n-1}^{(\alpha,\beta)}(x_{i+1,n})<0
\end{gather*}
for each $i\in\{2,\dots,n-1\}$ since we know from Theorem~\ref{Th:2.1} that the zeros of $P_{n}^{(\alpha,\beta)}$
and $P_{n-1}^{(\alpha,\beta)}$ that lie in the interval $(-1,1)$ are interlacing. Therefore,
from~\eqref{4.1}, $P_{n-2}^{(\alpha,\beta)}$ changes sign between each pair of successive zeros
of $P_n^{(\alpha,\beta)}$ that lie in $(-1,1)$ except possibly for one pair~$x_{j,n}$,~$x_{j+1,n}$, with $x_{j,n} < d_n < x_{j+1,n}$, $j\in\{2,\dots,n-1\}$. There are
$n-2$ intervals with endpoints at the successive zeros of $P_{n}^{(\alpha,\beta)}$ that lie in
the interval $(-1,1)$ and $P_{n-2}^{(\alpha,\beta)}$ has exactly $n-3$ distinct zeros in
$(-1,1)$. Therefore, the zeros of $P_{n-2}^{(\alpha,\beta)}$ that lie in the interval $(-1,1)$,
together with the point $d_n$, must interlace with the $n-1$ zeros of
$P_n^{(\alpha,\beta)}$ that lie in $(-1,1)$. The stated interlacing result follows from Theorem~\ref{Th:2.1} since $x_{1,n-2} < x_{1,n} < -1 < x_{2,n}$.
\end{proof}

\begin{proof}[Proof of Theorem \ref{Th:Askey1}] Suppose that $\delta < x_{2,n}$. From \cite[equation~(11), p.~71]{Rai},
\begin{gather}
(2(\beta+1) + (x+1)(\alpha +\beta+2n +2)) P_{n}^{(\alpha,\beta+1)} \nonumber\\
\qquad{}= (x+1)(\alpha +\beta+n+2)P_{n}^{(\alpha,\beta+2)}+ 2(\beta +n +1) P_{n}^{(\alpha,\beta)}.\label{5.1}
\end{gather}
Since $P_{n}^{(\alpha,\beta)}$ and $P_{n}^{(\alpha,\beta+1)}$ are co-prime for each $n \in{\mathbb N}$ and each f\/ixed $\alpha$, $\beta$, $\alpha > -1$, $-2 < \beta< -1$ by \eqref{2.1}, it follows from~\eqref{5.1} that the only possible common zero of $P_{n}^{(\alpha,\beta)}$ and $P_{n}^{(\alpha,\beta+2)}$ is $\delta:= -1-\frac{2(\beta+1)}{\alpha +\beta+2n +2}$. If $\delta < x_{2,n}$ then $P_{n}^{(\alpha,\beta)}$ and $P_{n}^{(\alpha,\beta+2)}$ are co-prime
since all the zeros of $P_{n}^{(\alpha,\beta+2)}$ lie in $(-1,1)$ and $x_{2,n}$ is the smallest zero of $P_{n}^{(\alpha,\beta)}$ in $(-1,1)$.
Evaluating~\eqref{5.1} at successive zeros $x_{1,n}< -1< x_{2,n}<\dots<x_{n,n}<1$ of $P_{n}^{(\alpha,\beta)}$ we obtain, for each $i \in \{1,2,\dots,n-1\}$,
\begin{gather} (x_{i,n}+1) (x_{i+1,n}+1)P_n^{(\alpha,\beta+2)}(x_{i,n}) P_n^{(\alpha,\beta +2)}(x_{i+1,n}){(\alpha+\beta +n+2)}^2\nonumber\\
\qquad{}= {(\alpha+\beta +2n+2)}^2(x_{i,n} -\delta) (x_{i+1,n} -\delta)
P_{n}^{(\alpha,\beta+1)}(x_{i,n}) P_{n}^{(\alpha,\beta+1)}(x_{i+1,n}). \label{5.2}
\end{gather}
Now, from \eqref{2.1}, $(x_{i,n}+1)(x_{i+1,n}+1)<0$ when $i=1;$ $(x_{i,n}+1)(x_{i+1,n}+1)>0$ for $i = 2,3,\dots,n-1$ and
$P_n^{(\alpha,\beta+1)}(x_{i,n})P_n^{(\alpha,\beta+1)}(x_{i+1,n}) <0$ for each $i = 1, 2,\dots,n-1$. Since $x_{1,n}<-1<\delta < x_{2,n}$ by assumption, we deduce from~\eqref{5.2} that $P_n^{(\alpha,\beta +2)}(x_{i,n})$ and $P_n^{(\alpha,\beta+2)}(x_{i+1,n})$ dif\/fer in sign for
each $i=1,2,\dots,n-1$. It follows that $P_n^{(\alpha,\beta +2)}$ has an odd number of zeros in each one of the intervals $(x_{i,n},x_{i+1,n})$ for $i = 1, 2,3,\dots,n-1$. Also, from~\eqref{2.1}, $x_{n,n}< z_{n,n}$ where $z_{1,n}<z_{2,n}<\dots <z_{n,n}$ are the zeros of
$P_{n}^{(\alpha,\beta+2)}$. It follows that the zeros of $P_n^{(\alpha,\beta)}$ and $P_n^{(\alpha,\beta+2)}$ are interlacing.

Suppose that $\delta > x_{2,n}$. A similar analysis of~\eqref{5.2} shows that $P_n^{(\alpha,\beta +2)}$ has the same sign at the smallest two zeros $x_{1,n}$ and $x_{2,n}$ of $P_n^{(\alpha,\beta)}$ and therefore an even number of zeros in the interval $(x_{1,n},x_{2,n})$, which shows that interlacing does not hold. Obviously, if $\delta = x_{2,n}$ then~$\delta$ is a~common zero of $P_n^{(\alpha,\beta)}$ and $P_n^{(\alpha,\beta+2)}$ so interlacing does not hold. This completes the proof.
\end{proof}

\begin{proof}[Proof of Theorem \ref{Th:Askey2}]
Evaluating \eqref{5.1} at successive zeros $z_{i,n}$, $z_{i+1,n}$ of $P_{n}^{(\alpha,\beta+2)}$, we have, for each $i \in \{1,2,\dots,n-1\}$,
\begin{gather*}4{(\beta+n+1)} ^2 P_n^{(\alpha,\beta)}(z_{i,n}) P_n^{(\alpha,\beta)}(z_{i+1,n})\\
 \qquad{} = {(\alpha +\beta +2n +2)}^2(z_{i,n} -\delta) (z_{i+1,n} -\delta)
P_{n}^{(\alpha,\beta+1)}(z_{i,n}) P_{n}^{(\alpha,\beta+1)}(z_{i+1,n}).
\end{gather*}
From \cite[Theorem~2.4]{DrJoMb} we know that if $y_{1,n}<y_{2,n}<\dots<y_{n,n}$ denote the zeros of $P_n^{(\alpha,\beta+1)}$, then
\begin{gather}
-1 < y_{1,n} < z_{1,n}< y_{2,n} < z_{2,n} < \dots< y_{n,n}< z_{n,n}<1, \label{4.5}
\end{gather}
so that $P_n^{(\alpha,\beta+1)}(z_{i,n})P_n^{(\alpha,\beta+1)}(z_{i+1,n}) <0$ for each $i = 1, 2,\dots,n-1$, while $(z_{i,n} -\delta)(z_{i+1,n} -\delta) > 0$ unless $\delta \in (z_{i,n}, z_{i+1,n})$. This means that there are two possibilities: (a) $P_n^{(\alpha,\beta)}$ has
$n-1$ sign changes between successive zeros of $P_n^{(\alpha,\beta+2)}$ in $(-1,1)$ and $\delta \notin (z_{i,n}, z_{i+1,n})$ for any $i \in \{1,2,\dots,n-1\}$; or~(b) $P_n^{(\alpha,\beta)}$ has $n-2$ sign changes between successive zeros of $P_n^{(\alpha,\beta+2)}$ in $(-1,1)$ and
$\delta$ lies in one interval, say $\delta \in (z_{j,n}, z_{j+1,n})$ where $j \in \{1,2,\dots,n-1\}$. If~(a) holds then since $P_n^{(\alpha,\beta)}$ has exactly $n-1$ simple zeros in $(-1,1)$, these zeros, together with the point~$\delta$, interlace with the zeros of
$P_n^{(\alpha,\beta+2)}$ in~$(-1,1)$. If, on the other hand, (b)~holds then $P_n^{(\alpha,\beta)}$ has no sign change, and therefore an even number of zeros, in the interval $(z_{j,n}, z_{j+1,n})$ that contains~$\delta$. Since~$P_n^{(\alpha,\beta)}$ has exactly $n-1$ simple zeros in $(-1,1)$ and $n-2$
sign changes in $(-1,1)$, we deduce that no zero of $P_n^{(\alpha,\beta)}$ lies in the interval $(z_{j,n}, z_{j+1,n})$ that
contains~$\delta$ and one zero of~$P_n^{(\alpha,\beta)}$ is either $< z_{1,n}$ or $> z_{n,n}$. Since we know from \eqref{2.1} that
the largest zero $x_{n,n}$ of~$P_n^{(\alpha,\beta)}$ satisf\/ies $x_{n,n}< y_{n,n}$ while from~\eqref{4.5} $y_{n,n}< z_{n,n}$, the only possibility is that the smallest zero~$x_{2,n}$ of~$P_n^{(\alpha,\beta)}$ in $(-1,1)$ is $< z_{1,n}$. Therefore, the zeros of $(x-\delta)P_n^{(\alpha,\beta)}(x)$ interlace with the zeros of~$P_n^{(\alpha,\beta+2)}(x)$ and the result follows.
\end{proof}

\begin{proof}[Proof of Theorem~\ref{Th:la}] Evaluating \eqref{fo} at successive zeros $x_{i,n}$, $x_{i+1,n}$, $i\in\{1,2,\dots,n-1\}$ of~$P_n^{(\alpha,\beta}$ we obtain
\begin{gather*}
(x_{i,n}+1)(x_{i+1,n}+1)(\alpha +\beta +n+1)^2 P_{n-1}^{(\alpha ,\beta +2)}(x_{i,n})P_{n-1}^{(\alpha ,\beta +2)}(x_{i+1,n})\\
\qquad{} = 4 (\beta +1)^2 P_{n-1}^{(\alpha ,\beta +1)}(x_{i,n})P_{n-1}^{(\alpha ,\beta +1)}(x_{i+1,n}).
\end{gather*}
From \eqref{2.2} with $n$ replaced by $n-1$ we have $P_{n-1}^{(\alpha ,\beta +1)}(x_{i,n})P_{n-1}^{(\alpha ,\beta +1)}(x_{i+1,n})<0$ while $(1+x_{i,n})(1+x_{i+1,n})<0$ for $i=1$ and $>0$ for $i\in\{2,3,\dots,n-1\}$. Therefore $P_{n-1}^{(\alpha ,\beta +2)}(x_{i,n})P_{n-1}^{(\alpha ,\beta +2)}(x_{i+1,n}){<}0$ for each $i\in\{2,3,\dots,n-1\}$. Hence $P_{n-1}^{(\alpha ,\beta +2)}$ has an even number of sign changes in $(x_{1,n},x_{2,n})$ and an odd number of sign changes in $(x_{i,n},x_{i+1,n})$ for $i\in\{2,3\dots,n-1\}$. Since $P_{n-1}^{(\alpha,\beta+2)}$ has $n-1$ distinct zeros, there must be exactly one zero of $P_{n-1}^{(\alpha,\beta+2)}$ in each of the $n-2$ intervals $(x_{i,n},x_{i+1,n})$, $i\in\{2,3\dots,n-1\}$. The remaining zero of $P_{n-1}^{(\alpha,\beta+2)}$ must lie in $(-1,1)$ and cannot lie in the interval $(-1,x_{2,n})$. Therefore the only possibility is that the largest zero $z_{n-1,n-1}$ of $P_{n-1}^{(\alpha,\beta+2)}$ is $>x_{n,n}$.
\end{proof}
\begin{proof}[Proof of Theorem \ref{Th:St1}]
(i)~We can write \eqref{2.17} as \begin{gather}\label{2.18}
 (k_1 - (x+1)k_2)P_{n-1}^{(\alpha,\beta)} (x) = -(1+x)k_3P_{n-2}^{(\alpha,\beta+1)}(x) - k_4 P_{n}^{(\alpha,\beta)} (x).
\end{gather}
Let $x_{i,n}$, $i\in\{1,2,\dots,n\}$ denote the zeros of $P_{n}^{(\alpha,\beta)}$ in ascending order. Note that $(1+x_{i,n}) \neq \frac{k_1}{k_2}$ for any $i\in\{1,\dots,n\}$ since that would
contradict the assumption that $P_{n}^{(\alpha,\beta)}$ and $P_{n-2}^{(\alpha,\beta+1)}$ are
co-prime. Evaluating~\eqref{2.18} at each pair of zeros $x_{i,n}$ and $x_{i+1,n}$,
$i\in\{2,\dots,n-1\}$, of $P_{n}^{(\alpha,\beta)}$ that lie in the interval $(-1,1)$, we obtain
\begin{gather} \label{2.19}
\frac{P_{n-1}^{(\alpha,\beta)}(x_{i,n})P_{n-1}^{(\alpha,\beta)}(x_{i+1,n})}{P_{n-2}^{(\alpha,\beta+1)}(x_{i,n})
P_{n-2}^{(\alpha,\beta+1)}(x_{i+1,n})}= \frac{(1+x_{i,n})(1+ x_{i+1,n}) k_3^2}{(k_1-
(x_{i,n}+1)k_2)(k_1- (x_{i+1,n}+1)k_2)}.
\end{gather}
Since $(1+x_{i,n})$ and $(1+x_{i+1,n})$ are positive for $i\in\{2,\dots,n-1\}$, the right-hand
side of \eqref{2.19} is positive if and only if $\frac{k_1}{k_2} -1 \notin(x_{i,n},x_{i+1,n})$
for any $i\in\{2,\dots,n-1\}$. Suppose, now, that
$\frac{k_1}{k_2} -1\notin(x_{i,n},x_{i+1,n})$ for any $i\in\{2,\dots,n-1\}$. Since the zeros $x_{i,n-1}$, $i\in\{2,\dots,n-1\}$ of~$P_{n-1}^{(\alpha,\beta)}$ interlace with the
zeros $x_{i,n}$, $i\in\{2,\dots,n\}$ of $P_n^{(\alpha,\beta)}$, $\alpha > -1$, $-2<\beta<-1$ (Theorem~\ref{Th:2.1}),
we see from~\eqref{2.19} that $P_{n-1}^{(\alpha,\beta)}(x_{i,n})P_{n-1}^{(\alpha,\beta)}(x_{i+1,n}) < 0$ for each
$i\in\{2,\dots,n-1\}$, $n \in{\mathbb N}$, $n \geq 2$. Therefore if
$\frac{k_1}{k_2} -1\notin(x_{i,n},x_{i+1,n})$ for any $i\in\{2,\dots,n-1\}$, the $n-2$ distinct
zeros of~$P_{n-2}^{(\alpha,\beta+1)}$ in $(-1,1)$ interlace with the $n-1$ zeros of~$P_{n}^{(\alpha,\beta)}$ that lie in $(-1,1)$. Further, by our assumption, the point
$\frac{k_1}{k_2} -1$ lies outside the interval with endpoints at the smallest positive zero
$x_{2,n}$ of~$P_{n}^{(\alpha,\beta)}$ and its largest zero $x_{n,n}$ so interlacing holds between the $n-2$ simple zeros of~$ P_{n-2}^{(\alpha,\beta+1)}$ together with
the point $\frac{k_1}{k_2} -1$ and the $n-1$ zeros of~$P_n^{(\alpha,\beta)}$ that lie in
$(-1,1)$. Suppose now that $\frac{k_1}{k_2} -1\in(x_{i,n}, x_{i+1,n})$ for some
$i\in\{2,\dots,n-1\}$. Then, in this single interval say $(x_{j,n}, x_{j+1,n})$ containing
$\frac{k_1}{k_2} -1$, there will be no sign change of~$P_{n-1}^{(\alpha,\beta)}$ but its sign
will change in each of the remaining $n-3$ intervals with endpoints at the successive zeros of
$P_{n}^{(\alpha,\beta)}$. However, evaluating~\eqref{2.18} at~$x_{1,n}$ and~$x_{2,n}$, we obtain
\begin{gather}\label{2.20}
\frac{P_{n-1}^{(\alpha,\beta)}(x_{1,n})P_{n-1}^{(\alpha,\beta)}(x_{2,n})}{P_{n-2}^{(\alpha,\beta+1)}(x_{1,n})
P_{n-2}^{(\alpha,\beta+1)}(x_{2,n})}= \frac{(1+x_{1,n})(1+ x_{2,n})k_2^2 k_3^2}{(\frac{k_1}{k_2} -1-x_{1,n})(\frac{k_1}{k_2} -1-x_{2,n})}.
\end{gather}
Now $\frac{k_1}{k_2} -1\in(x_{i,n}, x_{i+1,n})$ for some $i\in\{2,\dots,n-1\}$ so $\frac{k_1}{k_2} -1\notin(x_{1,n}, x_{2,n})$. The right-hand side of
\eqref{2.20} is therefore negative while, from Theorem \ref{Th:2.1} with $n$ replaced by $n-1$, we know that
$P_{n-1}^{(\alpha,\beta)}(x_{1,n})P_{n-1}^{(\alpha,\beta)}(x_{2,n})>0$.
Therefore $P_{n-2}^{(\alpha,\beta+1)}$ has a dif\/ferent sign at $x_{1,n}$ and~$x_{2,n}$ and
therefore one zero greater that $-1$ but less than $x_{2,n}$. We can therefore deduce that in each case, the $n-2$
simple zeros of $P_{n-2}^{(\alpha,\beta+1)}$, together with the point $\frac{k_1}{k_2} -1$,
interlace with the $n-1$ zeros of $P_n^{(\alpha,\beta)}$ in $(-1,1)$ if
$P_{n-2}^{(\alpha,\beta+1)}$ and $P_n^{(\alpha,\beta)}$ are co-prime.

(ii) Since the zeros of $P_{n-2}^{(\alpha,\beta +t)}$ are increasing functions of $t$ for
$2 \le t \le 4$ \cite[Theorem~6.21.1]{Sze}, it will be suf\/f\/icient to prove~(ii) in the two
special cases $t=2$ and $t=4$.
For the case $t=2$, we note that since the polynomials $P_{n}^{(\alpha,\beta)}$ and $P_{n-1}^{(\alpha,\beta)}$ are
co-prime~\eqref{2.3}, it follows from \eqref{n2b2} that the only possible common zero of
$P_{n-2}^{(\alpha,\beta +2)}$ and $P_{n}^{(\alpha,\beta)}$ is $-1 + \frac{2(\beta+1)}{\alpha +
\beta + 2n}$ which is $<-1$ for each $\alpha$, $\beta$, $\alpha>-1$, $-2<\beta < -1$. Since all of
the zeros of $P_{n-2}^{(\alpha,\beta +2)}$ lie in the interval $(-1,1)$, $P_{n-2}^{(\alpha,\beta +2)}$ and
$P_{n}^{(\alpha,\beta)}$ are co-prime for $\alpha>-1$, $-2<\beta < -1$.
Evaluating \eqref{n2b2} at the $n-2$ pairs of successive zeros $x_{i,n}$ and $x_{i+1,n}$,
$i\in\{2,\dots,n-1\}$ of $P_{n}^{(\alpha,\beta)}$ that lie in $(-1,1)$ yields
\begin{gather}
 \frac{P_{n-1}^{(\alpha,\beta)}(x_{i,n})P_{n-1}^{(\alpha,\beta)}(x_{i+1,n})}{P_{n-2}^{(\alpha,\beta+2)}(x_{i,n})
P_{n-2}^{(\alpha, \beta+2)}(x_{i+1,n})} \nonumber\\
\qquad{} = \frac{(x_{i,n}+1)^2 (x_{i+1,n}+1)^2(\alpha+n-1)^2(\alpha+\beta+2n)^2}
{4(\beta+n)^2 (2n+\alpha+\beta)^2(x_{i,n}+1-A_n)(x_{i+1,n}+1-A_n)}.\label{2.21}
\end{gather}
The right-hand side of \eqref{2.21} is positive since $\frac{2(\beta+1)}{\alpha +\beta+2n} \notin (1 + x_{i,n}, 1 + x_{i+1,n})$ for any $i\in\{2,\dots$, $n-1\}$.
By Theorem \ref{Th:2.1}, $P_{n-1}^{(\alpha,\beta)}(x_{i,n})P_{n-1}^{(\alpha,\beta)}(x_{i+1,n} ){<} 0$ and
hence $P_{n-2}^{(\alpha,\beta+2)}(x_{i,n}) P_{n-2}^{(\alpha,\beta+2)}(x_{i+1,n}) {<} 0$ for each
$i\in\{2,\dots,n-1\}$, and, for $t=2$, the interlacing result follows.

For the case $t=4$, since $P_{n}^{(\alpha,\beta)}$ and $P_{n-1}^{(\alpha,\beta)}$ are
co-prime, \eqref{n2b4} implies that the only possible common zero of
$P_{n-2}^{(\alpha,\beta +4)}$ and $P_{n}^{(\alpha,\beta)}$ is $-1+\frac{D_n}{C_n}$. Since $D_n<0$ and $C_n>0$ for each
$\alpha$, $\beta$ with $\alpha>-1$, $-2<\beta < -1$, and $n \geq 3$, it follows that $-1+\frac{D_n}{C_n}<-1$ and therefore, since all of the zeros of
$P_{n-2}^{(\alpha,\beta +4)}$ lie in $(-1,1)$, $P_{n-2}^{(\alpha,\beta +4)}$ and $P_{n}^{(\alpha,\beta)}$ are co-prime for $\alpha>-1$, $-2<\beta < -1$.
Evaluating \eqref{n2b4} at the $n-2$ pairs of successive zeros $x_{i,n}$ and $x_{i+1,n}$,
$i\in\{2,\dots,n-1\}$ of $P_{n}^{(\alpha,\beta)}$ that lie in $(-1,1)$,
\begin{gather}
 \frac{P_{n-1}^{(\alpha,\beta)}(x_{i,n})P_{n-1}^{(\alpha,\beta)}(
x_{i+1,n})}{P_{n-2}^{(\alpha,\beta+4)}(x_{i,n})
P_{n-2}^{(\alpha, \beta+4)}(x_{i+1,n})}\nonumber\\
\qquad{}
=\frac{(x_{i,n}+1)^4(x_{i+1,n}+1)^4E_n^2}{64(n+\beta)^2(\beta+2)^2(C_n(x_{i,n}+1)+D_n)(C_n(x_{i+1,n}+1)+D_n)}.\label{n2b4r}
\end{gather}
The right-hand side of \eqref{n2b4r} is positive since $\frac{D_n}{C_n} \notin (1 + x_{i,n}, 1 + x_{i+1,n})$ for $i\in\{2,\dots,n-1\}$. By Theorem \ref{Th:2.1}, $P_{n-1}^{(\alpha,\beta)}(x_{i,n})P_{n-1}^{(\alpha,\beta)}(x_{i+1,n} )< 0$ and
hence $P_{n-2}^{(\alpha,\beta+4)}(x_{i,n}) P_{n-2}^{(\alpha,\beta+4)}(x_{i+1,n}) < 0$ for each
$i\in\{2,\dots,n-1\}$, and, for $t=4$, the interlacing result follows.
\end{proof}

\begin{proof}[Proof of Theorem \ref{Th:bounds}] Let $x_{1,n}$ and $y_{1,n-2}^{(\alpha,\beta+t)}$, $t\in\{2,3,4\}$ denote the smallest zero of $P_n^{(\alpha,\beta)}$ and $P_{n-2}^{(\alpha,\beta+t)}$ respectively. It follows from~\eqref{2.3}, Theorem~\ref{Th:St1}(ii) and the monotonicity of the zeros of Jacobi polynomials (cf.\ \cite[Theorem~7.1.2]{Ism}) that
\begin{gather}\label{order}
x_{1,n-1}<x_{1,n}<-1<x_{2,n}<y_{1,n-2}^{(\alpha,\beta+2)}<y_{1,n-2}^{(\alpha,\beta+3)}<y_{1,n-2}^{(\alpha,\beta+4)}.
\end{gather}
Since $\lim\limits_{x\to -\infty}P_n^{(\alpha,\beta)}(x)=\infty$ for $n$ even, while $\lim\limits_{x\to -\infty}P_n^{(\alpha,\beta)}(x)=-\infty$ for~$n$ odd, we deduce from~\eqref{order} that
\begin{gather}\label{sign}
\frac{P_{n-2}^{(\alpha,\beta+t)}(x_{1,n})}{P_{n-1}^{(\alpha,\beta)}(x_{1,n})}>0 \qquad \text{for} \quad t\in\{2,3,4\}.
\end{gather}
Evaluating \eqref{n2b2} at $x_{1,n}$, we obtain
\begin{gather*}
\frac{P_{n-2}^{(\alpha,\beta+2)}(x_{1,n})}{P_{n-1}^{(\alpha,\beta)}(x_{1,n})}=
\frac{2(\beta+n)((x_{1,n}+1)(2n+\alpha+\beta)-2(\beta+1))}{(x_{1,n}+1)^2(n+\alpha-1)(2n+\alpha+\beta)}
\end{gather*}
and therefore it follows from \eqref{sign} that $(x_{1,n}+1)(2n+\alpha+\beta)-2(\beta+1)>0$ for $n\geq3$.
This yields the bound \begin{gather*}
x_{1,n}>\frac{2(\beta+1)}{2n+\alpha+\beta}-1.
\end{gather*}

Next, evaluating \eqref{n2b3} at $x_{1,n}$ we obtain
\begin{gather}\label{ag}
\frac{P_{n-2}^{(\alpha,\beta+3)}(x_{1,n})}{P_{n-1}^{(\alpha,\beta)}(x_{1,n})}
=\frac{4(n+\beta)(n+\beta+1)(x_{1,n}+B_n)}{(x_{1,n}+1)^3(n+\alpha-1)(2n+\alpha+\beta)}.
\end{gather}
Since the left hand side of \eqref{ag} is positive by~\eqref{sign}, $(x_{1,n}+1)^3<0$ by~\eqref{2.1} and $B_n>1$ for $n\geq 3$, $\alpha>-1$, $-2<\beta<-1$, we see that $x_{1,n}<-B_n<-1$.

Evaluating \eqref{n2b4} at $x_{1,n}$ we obtain{\samepage
\begin{gather*}
\frac{P_{n-2}^{(\alpha,\beta+4)}(x_{1,n})}{P_{n-1}^{(\alpha,\beta)}(x_{1,n})}
=\frac{8(n+\beta)(\beta+2)(C_n(x_{1,n}+1)-D_n)}{(x_{1,n}+1)^4E_n}
\end{gather*}
and it follows from \eqref{sign} that $C_n(x_{1,n}+1)-D_n>0$.}

Finally, since $C_n-(\beta+3)(2n+\alpha+\beta)=2(n-1)(n+\alpha-1)>0$ for $n\geq 3$ and $\alpha>-1$, we see that
\begin{gather*}
-1+\frac{2(\beta+1)}{2n+\alpha+\beta}<-1+\frac{2(\beta+1)(\beta+3)}{C_n}
\end{gather*} for each $n\geq 3$, $\alpha>-1$ and $-2<\beta<-1$.
\end{proof}

\subsection*{Acknowledgments}

The research of both authors was funded by the National Research Foundation of South Africa. We thank the referees for helpful suggestions and insights.

\pdfbookmark[1]{References}{ref}
\LastPageEnding


\begin{thebibliography}{99}
\footnotesize\itemsep=0pt

\bibitem{area}
Area I., Godoy E., Ronveaux A., Zarzo A., Solving connection and linearization
 problems within the {A}skey scheme and its {$q$}-analogue via inversion
 formulas, \href{http://dx.doi.org/10.1016/S0377-0427(00)00640-3}{\textit{J.~Comput. Appl. Math.}} \textbf{133} (2001), 151--162.

\bibitem{askey65}
Askey R.A., Orthogonal expansions with positive coef\/f\/icients, \href{http://dx.doi.org/10.1090/S0002-9939-1965-0185331-4}{\textit{Proc.
 Amer. Math. Soc.}} \textbf{16} (1965), 1191--1194.

\bibitem{Ask}
Askey R.A., Graphs as an aid to understanding special functions, in Asymptotic
 and computational analysis ({W}innipeg, {MB}, 1989), \textit{Lecture Notes in
 Pure and Appl. Math.}, Vol.~124, Dekker, New York, 1990, 3--33.

\bibitem{Branq}
Branquinho A., Huertas E.J., Rafaeli F.R., Zeros of orthogonal polynomials
 generated by the {G}eronimus perturbation of measures, in Computational
 Science and its Applications~-- {ICCSA} 2014, {P}art~{I}, \href{http://dx.doi.org/10.1007/978-3-319-09144-0_4}{\textit{Lecture
 Notes in Comput. Sci.}}, Vol.~8579, Springer, Cham, 2014, 44--59,
 \href{http://arxiv.org/abs/1402.6256}{arXiv:1402.6256}.

\bibitem{BDR}
Brezinski C., Driver K.A., Redivo-Zaglia M., Quasi-orthogonality with
 applications to some families of classical orthogonal polynomials,
 \href{http://dx.doi.org/10.1016/j.apnum.2003.10.001}{\textit{Appl. Numer. Math.}} \textbf{48} (2004), 157--168.

\bibitem{bu}
Bustamante J., Mart{\'{\i}}nez-Cruz R., Quesada J.M., Quasi orthogonal {J}acobi
 polynomials and best one-sided {$L_1$} approximation to step functions,
 \href{http://dx.doi.org/10.1016/j.jat.2015.05.001}{\textit{J.~Approx. Theory}} \textbf{198} (2015), 10--23.

\bibitem{Chi}
Chihara T.S., On quasi-orthogonal polynomials, \href{http://dx.doi.org/10.1090/S0002-9939-1957-0086898-2}{\textit{Proc. Amer. Math. Soc.}}
 \textbf{8} (1957), 765--767.

\bibitem{Dickinson}
Dickinson D., On quasi-orthogonal polynomials, \href{http://dx.doi.org/10.1090/S0002-9939-1961-0123749-2}{\textit{Proc. Amer. Math. Soc.}}
 \textbf{12} (1961), 185--194.

\bibitem{Dimitrov}
Dimitrov D.K., Connection coef\/f\/icients and zeros of orthogonal polynomials,
 \href{http://dx.doi.org/10.1016/S0377-0427(00)00653-1}{\textit{J.~Comput. Appl. Math.}} \textbf{133} (2001), 331--340.

\bibitem{DiIsRa}
Dimitrov D.K., Ismail M.E.H., Rafaeli F.R., Interlacing of zeros of orthogonal
 polynomials under modif\/ication of the measure, \href{http://dx.doi.org/10.1016/j.jat.2013.07.007}{\textit{J.~Approx. Theory}}
 \textbf{175} (2013), 64--76.

\bibitem{Draux}
Draux A., On quasi-orthogonal polynomials, \href{http://dx.doi.org/10.1016/0021-9045(90)90042-O}{\textit{J.~Approx. Theory}}
 \textbf{62} (1990), 1--14.

\bibitem{DrJoMb}
Driver K., Jordaan K., Mbuyi N., Interlacing of the zeros of {J}acobi
 polynomials with dif\/ferent parameters, \href{http://dx.doi.org/10.1007/s11075-008-9162-2}{\textit{Numer. Algorithms}} \textbf{49}
 (2008), 143--152.

\bibitem{DrMu}
Driver K., Muldoon M.E., Common and interlacing zeros of families of {L}aguerre
 polynomials, \href{http://dx.doi.org/10.1016/j.jat.2013.11.013}{\textit{J.~Approx. Theory}} \textbf{193} (2015), 89--98.

\bibitem{Fej}
Fej{\'e}r L., Mechanische {Q}uadraturen mit positiven {C}otesschen {Z}ahlen,
 \href{http://dx.doi.org/10.1007/BF01474575}{\textit{Math.~Z.}} \textbf{37} (1933), 287--309.

\bibitem{Ism}
Ismail M.E.H., Classical and quantum orthogonal polynomials in one variable,
 \href{http://dx.doi.org/10.1017/CBO9781107325982}{\textit{Encyclopedia of Mathematics and its Applications}}, Vol.~98, Cambridge
 University Press, Cambridge, 2005.

\bibitem{maroni1}
Maroni P., Une caract\'erisation des polyn\^omes orthogonaux semi-classiques,
 \textit{C.~R.~Acad. Sci. Paris S\'er.~I Math.} \textbf{301} (1985), 269--272.

\bibitem{maroni2}
Maroni P., Prol\'egom\`enes \`a l'\'etude des polyn\^omes orthogonaux
 semi-classiques, \href{http://dx.doi.org/10.1007/BF01773932}{\textit{Ann. Mat. Pura Appl.}} \textbf{149} (1987), 165--184.

\bibitem{maroni}
Maroni P., Une th\'eorie alg\'ebrique des polyn\^omes orthogonaux.
 {A}pplication aux polyn\^omes orthogonaux semi-classiques, in Orthogonal
 Polynomials and their Applications ({E}rice, 1990), \textit{IMACS Ann.
 Comput. Appl. Math.}, Vol.~9, Baltzer, Basel, 1991, 95--130.

\bibitem{Rai}
Rainville E.D., Special functions, The Macmillan Co., New York, 1960.

\bibitem{Rie}
Riesz M., Sur le probl\`eme des moments.~III, \textit{Ark. Mat. Astron. Fys.}
 \textbf{17} (1923), 1--52.

\bibitem{Sho}
Shohat J., On mechanical quadratures, in particular, with positive
 coef\/f\/icients, \href{http://dx.doi.org/10.2307/1989740}{\textit{Trans. Amer. Math. Soc.}} \textbf{42} (1937), 461--496.

\bibitem{Sze}
Szeg{\H{o}} G., Orthogonal polynomials, \textit{American Mathematical Society,
 Colloquium Publications}, Vol.~23, 4th ed., Amer. Math. Soc., Providence,
 R.I., 1975.

\bibitem{sz}
Szwarc R., Connection coef\/f\/icients of orthogonal polynomials, \href{http://dx.doi.org/10.4153/CMB-1992-071-3}{\textit{Canad.
 Math. Bull.}} \textbf{35} (1992), 548--556.

\bibitem{tr73}
Trench W.F., Nonnegative and alternating expansions of one set of orthogonal
 polynomials in terms of another, \href{http://dx.doi.org/10.1137/0504012}{\textit{SIAM~J. Math. Anal.}} \textbf{4}
 (1973), 111--115.

\bibitem{tr75}
Trench W.F., Proof of a conjecture of {A}skey on orthogonal expansions with
 positive coef\/f\/icients, \href{http://dx.doi.org/10.1090/S0002-9904-1975-13912-7}{\textit{Bull. Amer. Math. Soc.}} \textbf{81} (1975),
 954--956.

\bibitem{wilson}
Wilson M.W., Nonnegative expansions of polynomials, \href{http://dx.doi.org/10.1090/S0002-9939-1970-0287244-8}{\textit{Proc. Amer. Math.
 Soc.}} \textbf{24} (1970), 100--102.

\bibitem{zh}
Zhedanov A., Rational spectral transformations and orthogonal polynomials,
 \href{http://dx.doi.org/10.1016/S0377-0427(97)00130-1}{\textit{J.~Comput. Appl. Math.}} \textbf{85} (1997), 67--86.

\end{thebibliography}
\end{document}